\newcommand{\Rz}{\mathbb{R}}
\newcommand{\Rzsp}{\mathbb{R}_{>0}}
\newcommand{\Kn}{K\!n}
\newcommand{\Ma}{M\!a}
\newcommand{\Rey}{R\!e}
\newcommand{\matD}{\frac{\mathrm{D}}{\mathrm{D} t}}
\newcommand{\matDil}{\mathrm{D}/(\mathrm{D} t)}
\newtheorem{theorem}{Theorem}
\newtheorem{proposition}{Proposition}
\newtheorem{lemma}{Lemma}
\newtheorem{conjecture}{Conjecture}
\theoremstyle{definition}
\newtheorem{definition}{Definition}
\newtheorem{remark}{Remark}
\newtheorem{assumption}{Assumption}
\title{Homogenized lattice Boltzmann methods for fluid flow through porous media -- part I: kinetic model derivation}
\author{
    \href{https://orcid.org/0000-0001-8555-4245}{\includegraphics[scale=0.06]{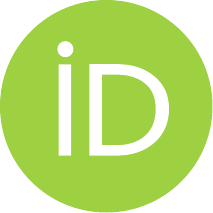}\hspace{1mm}Stephan Simonis}\thanks{Corresponding author}\\
	Institute for Applied and Numerical Mathematics\\
    Karlsruhe Institute of Technology\\
    76131 Karlsruhe, Germany \\
	\texttt{stephan.simonis@kit.edu} \\
	\And
    Nicolas Hafen \\
	Institute of Mechanical Process Engineering and Mechanics \\
    Karlsruhe Institute of Technology \\
    76131 Karlsruhe, Germany \\
	\And
    Julius Je{\ss}berger \\
    Lattice Boltzmann Research Group \\
    Karlsruhe Institute of Technology \\
    76131 Karlsruhe, Germany \\
    \And 
    \href{https://orcid.org/0000-0002-3442-6857}{\includegraphics[scale=0.06]{orcid.pdf}\hspace{1mm}Davide Dapelo} \\
    School of Engineering \\
    University of Liverpool \\
    L69 3BX Liverpool, United Kingdom
    \And
    Gudrun Th\"{a}ter \\
    Institute for Applied and Numerical Mathematics \\
    Karlsruhe Institute of Technology \\
    76131 Karlsruhe, Germany \\
    \And 
    \href{https://orcid.org/0000-0003-1026-6462}{\includegraphics[scale=0.06]{orcid.pdf}\hspace{1mm}Mathias J. Krause}\\
    Lattice Boltzmann Research Group \\
    Karlsruhe Institute of Technology \\
    76131 Karlsruhe, Germany \\
}
\begin{document}
\maketitle

\begin{abstract}
In this series of studies, we establish homogenized lattice Boltzmann methods (HLBM) for simulating fluid flow through porous media.
Our contributions in part I are twofold.
First, we assemble the targeted partial differential equation system by formally unifying the governing equations for nonstationary fluid flow in porous media.
A matrix of regularly arranged, equally sized obstacles is placed into the domain to model fluid flow through porous structures governed by the incompressible nonstationary Navier--Stokes equations (NSE).
Depending on the ratio of geometric parameters in the matrix arrangement, several homogenized equations are obtained.
We review existing methods for homogenizing the nonstationary NSE for specific porosities and discuss the applicability of the resulting model equations.
Consequently, the homogenized NSE are expressed as targeted partial differential equations that jointly incorporate the derived aspects.
Second, we propose a kinetic model, the homogenized Bhatnagar--Gross--Krook Boltzmann equation, which approximates the homogenized nonstationary NSE.
We formally prove that the zeroth and first order moments of the kinetic model provide solutions to the mass and momentum balance variables of the macrocopic model up to specific orders in the scaling parameter. 
Based on the present contributions, in the sequel (part II), the homogenized NSE are consistently approximated by deriving a limit consistent HLBM discretization of the homogenized Bhatnagar--Gross--Krook Boltzmann equation.
\end{abstract}

\keywords{
    lattice Boltzmann methods \and 
    kinetic models \and 
    Navier--Stokes equations \and 
    porous media \and 
    nonstationary fluid flow \and
    homogenization.}

\subjclass{35Q30, 35Q20, 35B27}

\section*{List of Symbols}

\begin{longtable}{p{0.13\linewidth} p{0.8\linewidth}}
	\midrule
        Expression & Meaning \\
    \midrule 
       DL & Darcy's law \\
       BL & Brinkman law \\
       NSE & Navier--Stokes equations \\
       LBM & lattice Boltzmann method \\
       LBE & lattice Boltzmann equation \\
       BGK & Bhatnagar--Gross--Krook \\
       BGKBE & Bhatnagar--Gross--Krook Boltzmann equation \\
       DVBE & discrete velocity BGK Boltzmann equation \\
       HNSE & homogenized Navier--Stokes equations \\
       HLBM & homogenized lattice Boltzmann method \\
       HLBE & homogenized lattice Boltzmann equation \\
       HBGKBE & homogenized BGK Boltzmann equation \\
       HDVBE & homogenized discrete velocity BGK Boltzmann equation \\
       TEQ & target equation \\
       \(\Omega\) & domain of the porous media including solid and fluid regions \\
       \(\Omega_{\epsilon}\) & fluid void filling the porous media structure \\
       \(\partial \Omega\) & Boundary of \(\Omega\) \\
       \(d\) & dimension, \(\Omega \in \mathbb{R}^{d}\) \\
       \(Y_{i}^{\epsilon}\) & \(i\)th cell in porous structure \\
       \(Y_{S,i}^{\epsilon}\), \(Y_{F,i}^{\epsilon}\) & \(i\)th spherical obstacle where \(1\leq i\leq N(\epsilon)\); and \(i\)-th fluid void cell \\
       \(N(\epsilon)\) & number of solid obstacles in the porous structure \\
       \(H^1(X)\) & Sobolev space \(H^{k}(X) = W^{k,2}(X)\), where \(k=1\) \\
       \(H^1_0(X)\) & functions \(f \in H^{1}(X)\) with vanishing trace \(f\vert_{\partial X} = 0\) \\
       \(H^{1}_{\mathrm{div}}(X)\) & divergence-free functions \(f \in H^{1}(X)\) \\
       \(H^{1}_{\#}(X)\) & \(X\)-periodic functions in \(H^{1}(X)\) \\
       \(\epsilon\) & side length of geometric porous media cell containing one obstacle \\
       \(a_{\epsilon}\) & size or diameter of solid obstacle \\
       \(a_{\epsilon}^{\mathrm{crit}}\) & critical  obstacle size \\
       \(\sigma_{\epsilon}\) & ratio function of cell side length and obstacle size \\
       \(\bm{u}_{\epsilon}\), \(p_{\epsilon}\) & fluid velocity and pressure on cell scale (nonhomogenized) \\
       \(\tilde{\bm{u}}_{\epsilon}\), \(\tilde{p}_{\epsilon}\) & extension of the solution \(\bm{u}_{\epsilon}\), \(p_{\epsilon}\) \\
       \(\bm{F}\) & given force field \\
       \(\nu\) & kinematic viscosity \\
       \(C_{i}^{\epsilon}\) & control volume containing \(Y_{S,i}^{\epsilon}\) \\
       \(\mathbf{M}\) & porosity matrix \\
       \(\bm{e}_{k}\) & \(k\)th unit basis vector of \(\mathbb{R}^{d}\) \\
       \(\iota_{i}^{\epsilon}\) & linear homeomorphism, mapping each cell to the unit cell \\
       \(\bm{w}_{k}\), \(q_{k}\) & fluid velocity and pressure in \(k\)th stationary local model problem \\
       \(\mathbf{A}\) & permeability tensor \\
       \(\bm{v}_{k}\), \(p_{k}\) & fluid velocity and pressure in \(k\)th stationary unit cell problem \\
       \(Y\) & unit cell \\
       \(Y_{S}\), \(Y_{F}\) & solid part and fluid part of unit cell \\
       \(Y^{m}_{S}\) & model obstacle in the model problem \\
       \(\delta\) & scaling prefactor for the case \(a_{\epsilon} = \delta \epsilon\) \\
       \(\bm{n}\) & outward pointing normal vector \\
       \(\bm{w}^{j}\), \(\pi^{j}\) & fluid velocity and pressure in \(j\)th nonstationary unit cell problem \\
       \(\tilde{\mathbf{A}}(t)\) & time-dependent permeability tensor \\
       \(C\) & scaling constant for the cases \(a_{\epsilon} = C\epsilon^{n}\), where \(n\in \mathbb{N}\) \\
       \(\sigma\) & constant limit value of ratio \(\sigma_{\epsilon}\) in case of \(a_{\epsilon} = \mathcal{O}(\epsilon^{3})\) \\
       \(\varphi\) & porosity \\
       \(A\) & Eigenvalue of isotropic permeability tensor \\
       \(f\) & Particle density function \\
       \(\bm{c}_i\) & \(i\)th discrete velocity \\
       \(\tilde{\bm{c}}_i\) & \(i\)th prefactored discrete velocity \\
    \midrule
\end{longtable}

\section{Introduction}
The governing equations for fluid flow in porous media typically consist of modified versions of the Navier--Stokes equations (NSE). 
Several mathematical models exist, based on the type of application. 
Depending on the context of porous media flows, most models can be categorized as either mathematically-motivated, or application-related. 

For the mathematical modeling of fluid flow through porous media, the incompressible NSE can be modified to include the effects of the solid matrix on the fluid flow in the void.
Various different mathematical models exist (see \cite{hornung1997homogenization,nield2017convection}, and references therein). 
Here, we recall the rigorous construction of porous media flow models formulated in Allaire's seminal works, see e.g.\ \cite{allaire1989homogeneisation,allaire1991homogenization,allaire1991homogenizationII,allaire1991homogenizationNSE,allaire1991continuity,allaire1992homogenization,allaire1992homogenizationAndTwo,allaire2010homogenizationLec2}. 
Therein, the geometric definition of porous media as sets of equidistant obstacles in the flow domains is considered to construct model equations via homogenization. 
As a result, several homogenization limits are derived, whereby the homogenized equations depend on the geometric configuration. 
We distinguish between three classical cases of homogenization limits: 
\begin{itemize}
\item incompressible NSE,
\item Brinkman law (BL), 
\item Darcy's law (DL). 
\end{itemize}
The respective limits in this categorization were rigorously proved for the stationary \cite{allaire1992homogenizationAndTwo} and nonstationary Stokes regime \cite{allaire1992homogenization}, as well as for the stationary NSE \cite{allaire1991homogenizationNSE} as starting points. 
Although suggested by Allaire, to the knowledge of the authors, the validity of the stationary categorization of homogenization limits is not completely proven for the nonstationary NSE.  
Nevertheless, the works of Mikeli\'{c}~\cite{mikelic1991homogenization,mikelic1994mathematical} and Feireisl \textit{et al.}~\cite{feireisl2016homogenization} cover the homogenization limit toward the BL and the DL in the non-stationary case in a different framework. 
Other contributions also used this structural categorization, see e.g.\ \cite{griebel2009homogenisation,klitz2006homogenisation,laptev2003numerical}. 
Although these models are likely to be interconnected, rigorous proofs of the underlying relations are rare and limited to linear and stationary settings. 
For instance, Allaire~\cite{allaire1991continuity} proved the compliance of a formally derived DL and the DL derived via homogenization (low volume fraction limit). 
Feppon~\cite{feppon2021high} and Feppon \textit{et al.}~\cite{feppon2022allThree} proved high-order homogenization limits for the Stokes equations in a unified procedure. 
To the knowledge of the authors, the latter is the first and only derivation covering all three classical cases together with the low volume fraction limit at once. 
However, it should be noted that these unified studies have not been conducted for homogenizing the nonstationary NSE, yet.

Besides the mathematically rigorous model derivation, application-based model construction has been found to be suitable for fluid flow in porous media \cite{nithiarasu1997natural,spaid1997lattice,guo2002lattice}.
Typically, empirical observations and matching terms are used to introduce model systems akin to Brinkman-~\cite{brinkman1949permeability}, Forchheimer-~\cite{forchheimer1901wasserbewegung}, Darcy-~\cite{darcy1856fontaines}, or mixed-type equations \cite{nield2017convection}. 
Depending on the characteristic scales of porosity in the application in question, the heuristically derived models can correctly recover the flow physics or severely disagree with experiments \cite{nield2017convection}. 
However, due to the large variation of involved spatial scales, the model equations are often solved numerically with highly parallelizable methods.  
For example, Spaid and Phelan~\cite{spaid1997lattice} proposed a lattice Boltzmann method (LBM) for approximating Stokes and Stokes--Brinkman equations as target models. 
The latter only apply to large-size obstacles in the porous matrix and solely recover stationary flows. 
The LBM meanwhile is an established numerical technique for the approximate solution of various transport problems \cite{lallemand2021lattice}. 
Providing distinct advantages in terms of parallelizability, the LBM is well-suited for computational fluid dynamics and multiphysics simulations where good scalability on high-performance computing (HPC) facilities is crucial \cite{krause2020openlb,simonis2020relaxation,haussmann2019direct,simonis2022binary,simonis2022forschungsnahe,bukreev2023consistent,simonis2022constructing,simonis2022forschungsnahe,mink2021comprehensive,dapelo2021lattice,simonis2023pde,haussmann2021fluid,simonis2021linear,simonis2022temporal,siodlaczek2021numerical}. 
Even standard LBM formulations offer an easy to implement and mostly second order accurate, intrinsically matrix-free algorithm in space-time. 
Those are well-suited for approximating nonstationary and nonlinear problems and, if optimized properly, also capable of saturating modern-day HPC machinery \cite{kummerlander2022advances,kummerlander2022implicit}. 
To the knowledge of the authors, the LBM has not been used yet to approximate the nonstationary homogenized NSE which governs time-dependent and nonlinear (possibly turbulent) fluid flow through abstracted porous media. 

Consequently, the overall aim of this series of works is to construct homogenized LBMs (HLBMs) that approximate the governing equations for homogenized nonstationary nonlinear fluid flow through porous media. 

Our contributions in part I are twofold. 
First, we assemble the targeted partial differential equation (PDE) system by formally unifying the governing equations for nonstationary fluid flow in homogenized porous media. 
To this end, a matrix of regularly arranged obstacles of equal size is placed into the domain to model fluid flow through structures of different porosities that is governed by the incompressible nonstationary NSE. 
Depending on the ratio of geometric parameters in the matrix arrangement, several cases of homogenized PDEs are obtained. 
We review the existing methods to homogenize the stationary NSE. 
From that we assemble a conjecture for the cases of PDE models resulting from homogenization of the nonstationary NSE for specific porosities. 
Moreover, we interpret connections between the resulting model equations from the perspective of applicability. 
Consequently, the homogenized nonstationary NSE are formulated as unified targeted PDE system which jointly incorporates the derived aspects. 
Second, we propose a kinetic model, named homogenized Bhatnagar--Gross--Krook (BGK) Boltzmann equation, which approximates the homogenized nonstationary NSE in a diffusive scaling limit. 
We formally prove that the zeroth and first order moments of the kinetic model provide solutions to the mass and momentum balance variables of the macrocopic model up to specific orders in the scaling parameter.  

Based on the present contributions, in the sequel (part II \cite{simonis2023hlbmPartII}) the homogenized NSE are consistently approximated by deriving a HLBM discretization of the homogenized BGK Boltzmann equation (HBGKBE) (see Figure~\ref{fig:limitCons}). 
Therein, a top-down derivation of HLBMs is provided, based on the limit consistent discretizations \cite{simonis2022limit} of Boltzmann-like equations with simplified collision. 
We thus construct homogenized lattice Boltzmann equations (HLBEs) that are second and first order consistent towards the pressure and the velocity of the HNSE, respectively. 

\begin{figure}[ht!]
\centering
\includegraphics[scale=1]{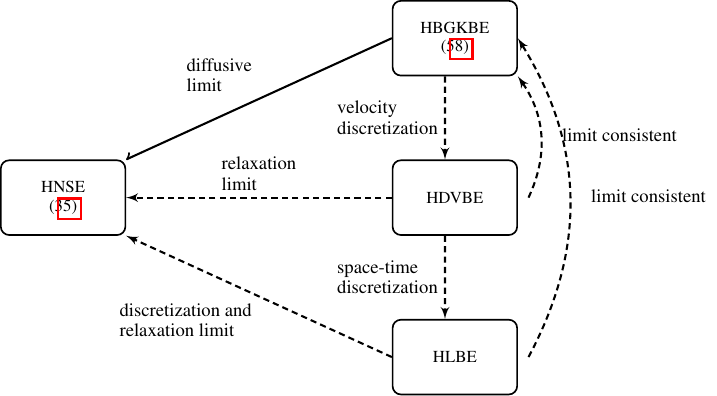}
\caption{Limit consistent derivation of HLBM. Limits considered in the present work are drawn with solid lines. Derivations considered in the sequel \protect\cite{simonis2023hlbmPartII} are dashed. The following abbreviations are used: HNSE (homogenized Navier--Stokes equations), HBGKBE (homogenized BGK Boltzmann equation), HDVBE (homogenized discrete velocity BGK Boltzmann equation), HLBE (homogenized lattice Boltzmann equation).}
\label{fig:limitCons}
\end{figure}

This work is structured as follows. 
In Section~\ref{sec:mathModel}, we summarize the geometric setup and the mathematical model based on homogenization of the stationary and the nonstationary NSE. 
Further, its physical interpretation is discussed. 
In Section~\ref{sec:kinMod}, the HBGKBE is constructed as kinetic model based on a porosity modified equilibrium. 
Convergence of the zeroth and first order moments towards variables which obey the mass and momentum balance equations of the HNSE is formally proven. 
Finally, in Section~\ref{sec:conclusion} we critically assess the present work, suggest follow-up studies and conclude the manuscript. 

\section{Mathematical Model}\label{sec:mathModel}
\subsection{Geometric setup}\label{subsec:geometric}
\begin{figure}[ht!]
\centerline{	
	\subfloat[Subvolume of the porous media]{
		\includegraphics[scale=1]{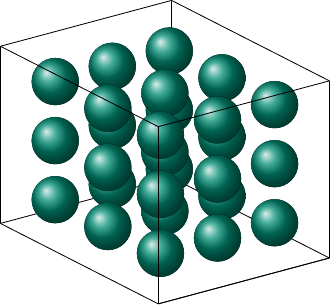}
		\label{fig:porousStructure} 
	}
	\subfloat[The \(i\)th cell]{
		\includegraphics[scale=0.7]{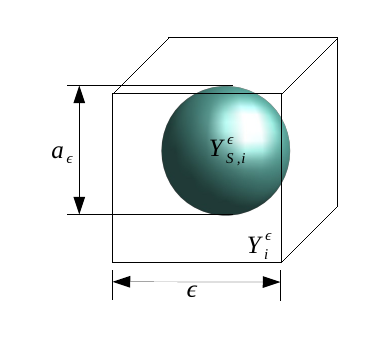}
		\label{fig:unitCell3d}
	}
}
\caption{Illustrations of the geometric model of a porous structure in \(d=3\) dimensions. 
The \(i\)th cell is denoted with \(Y_{i}^{\epsilon}\) containing a spherical matrix obstacle \(Y_{S,i}^{\epsilon}\) with radius \(a_{\epsilon}\). 
Each cell is cubic with side length \(\epsilon\).}
\end{figure}
Unless stated otherwise, \(C,C_{n} >0\) are constants, where \(n\in \mathbb{N}_{0}\). 
We model geometrically the flow through porous media via placing regularly arranged obstacles of equal size in the fluid domain \cite{allaire1989homogeneisation,allaire1991homogenization,allaire1991homogenizationNSE,allaire2010homogenizationLec2}. 
Let the domain \(\Omega \subseteq \mathbb{R}^{d}\), \(d\geq 2\) be defined as an open, bounded, and connected set. 
The boundary \(\partial\Omega\) is assumed to be smooth of class \(C^{1}\). 
The domain \(\Omega\) is covered with a regular mesh of period \(\epsilon > 0\) (see Figure~\ref{fig:porousStructure}), prescribing the cells \(Y_{i}^{\epsilon} = \left( 0, \epsilon\right)^{d}\), for \(1 \leq i \leq N\left(\epsilon\right)\) (see Figure~\ref{fig:unitCell3d}), where 
\begin{align}		
N\left(\epsilon\right) = \vert \Omega \vert \epsilon^{-d} \left( 1 + \mathcal{O}\left(1\right)\right) 
\end{align}
counts their number.  
Each cubical cell contains a solid spherical obstacle \(Y_{S,i}^{\epsilon}\) of diameter \(a_{\epsilon}\) located in its center and a complementary part filled with fluid 
\begin{align}
Y_{F,i}^{\epsilon} = Y_{i}^{\epsilon} \setminus Y_{S,i}^{\epsilon}.
\end{align} 
The overall fluid void is thus obtained via removal of the collective solid matrix, i.e.\ 
\begin{equation}\label{eq:microSolidRemoval}
	\Omega_{\epsilon} = \Omega \setminus \bigcup\limits_{i=1}^{N\left(\epsilon\right)} Y_{S,{i}}^{\epsilon} .
\end{equation}
Further, we assume that all obstacles are similar to a model obstacle \(Y^{m}_{S}\) of size \(a_{\epsilon}\).

Under the assumption that the obstacle diameter is much smaller than the cell length, i.e.\ \(a_{\epsilon} \ll \epsilon\) or equivalently 
\begin{equation}
\lim\limits_{\epsilon \searrow 0} \frac{a_{\epsilon} }{ \epsilon} = 0,
\end{equation} 
we introduce a notion of respective size for the obstacle, by defining the ratio
\begin{equation}\label{eq:ratio}
	\sigma_{\epsilon} = 
	\begin{cases}
		\left( \frac{\epsilon^{d}}{a_{\epsilon}^{d-2}} \right)^{\frac{1}{2}} \quad &\text{for } d \geq 3, \\
		\epsilon \left\vert \log \left( \frac{a_{\epsilon}}{\epsilon} \right) \right\vert^{\frac{1}{2}} \quad & \text{for } d = 2.
	\end{cases}  
\end{equation} 
\begin{proposition}
For a critical size \(a_{\epsilon} = a_{\epsilon}^{\mathrm{crit}}\) defined as
\begin{equation}\label{eq:critHoleSize}
a_{\epsilon}^{\mathrm{crit}} = 
\begin{cases}
C_{0} \epsilon^{\frac{d}{d-2}} \quad & \text{for } d \geq 3 , \\
e^{- \frac{C_{0}}{\epsilon^{2}}} \quad & \text{for } d=2 ,
\end{cases}
\end{equation} 
with \(0 < C_{0} < \infty\), the ratio \(\sigma_{\epsilon}\) reaches the nonnegative constant homogenization limit
\begin{equation}
\lim\limits_{\epsilon \searrow 0} \sigma_{\epsilon} = 
\begin{cases} 
\left( C_{0} \right)^{\frac{{2-d}}{2}} \quad & \text{for } d\geq 3, \\
\left( C_{0} \right)^{\frac{1}{2} } \quad & \text{for } d=2.
\end{cases}
\end{equation} 
\end{proposition}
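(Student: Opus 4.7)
The statement is essentially a verification: one plugs the definition of $a_\epsilon^{\mathrm{crit}}$ into the formula for $\sigma_\epsilon$ and reads off the limit. My plan is to split the argument along the two cases built into both definitions \eqref{eq:ratio} and \eqref{eq:critHoleSize}, and to make the algebraic substitution explicit in each.

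For $d \geq 3$ I would start from $\sigma_\epsilon = (\epsilon^d / a_\epsilon^{d-2})^{1/2}$ and substitute $a_\epsilon = C_0 \epsilon^{d/(d-2)}$. The exponent bookkeeping $a_\epsilon^{d-2} = C_0^{d-2}\, \epsilon^{d}$ cancels the $\epsilon^d$ in the numerator exactly, so $\sigma_\epsilon \equiv C_0^{(2-d)/2}$ for every admissible $\epsilon > 0$. The limit statement is therefore trivial in this case because $\sigma_\epsilon$ is literally constant in $\epsilon$; this is the punchline motivating the choice of critical size.

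For $d = 2$ the argument is slightly more delicate because of the logarithm. I would substitute $a_\epsilon = e^{-C_0/\epsilon^2}$ into $\sigma_\epsilon = \epsilon\,|\log(a_\epsilon/\epsilon)|^{1/2}$. Expanding the logarithm gives $\log(a_\epsilon/\epsilon) = -C_0/\epsilon^2 - \log\epsilon$. For $\epsilon$ sufficiently small the polynomial term $C_0/\epsilon^2$ dominates $|\log\epsilon|$, so the bracketed expression is negative and its absolute value equals $C_0/\epsilon^2 - |\log\epsilon|$. Pulling $\epsilon^2$ inside the square root then produces
\begin{equation*}
\sigma_\epsilon = \sqrt{\,C_0 - \epsilon^2 |\log \epsilon|\,}.
\end{equation*}
The only step that requires an analytic argument (rather than pure algebra) is the well-known fact that $\epsilon^2|\log\epsilon| \to 0$ as $\epsilon \searrow 0$, which follows from L'H\^opital's rule or a direct estimate. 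Continuity of the square root on $[0,\infty)$ then yields $\lim_{\epsilon \searrow 0}\sigma_\epsilon = C_0^{1/2}$.

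The main obstacle, if one can call it that, is purely bookkeeping: keeping track of the sign inside the logarithm for $d=2$ and verifying that $|\log\epsilon|$ is indeed subdominant to $C_0/\epsilon^2$ for $\epsilon$ below some threshold (which may be taken arbitrarily small since only the limit matters). Both cases together establish the claimed homogenization limit, and together they explain why \eqref{eq:critHoleSize} is the correct scaling at which $\sigma_\epsilon$ stabilizes at a strictly positive, finite value — the threshold that separates the incompressible, Brinkman, and Darcy regimes referenced in the introduction.
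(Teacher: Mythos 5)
Your proposal is correct and follows the same route as the paper, which simply substitutes \eqref{eq:critHoleSize} into \eqref{eq:ratio}; you merely make the exponent cancellation for \(d\geq 3\) and the sign/dominance bookkeeping for \(d=2\) explicit. The added detail (in particular \(\epsilon^{2}\vert\log\epsilon\vert\to 0\)) is accurate and fills in exactly what the paper leaves to the reader.
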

\begin{proof}
Substituting \eqref{eq:critHoleSize} into \eqref{eq:ratio} completes the proof. 
\end{proof}
Below in Section~\ref{section:homoNS}, it will be shown that, for \(\epsilon\searrow 0\), large obstacles correspond to the limit \(\sigma_\epsilon\rightarrow\infty\), small obstacles to the limit \(\sigma_\epsilon\rightarrow 0\), and critical obstacles to the limit \(\sigma_\epsilon\) with \(0<\sigma<\infty\).

\subsection{Homogenized stationary Navier\textendash Stokes equations}\label{section:homoNS}
\noindent
In the case of independence from time, the incompressible fluid flow in \(\Omega_{\epsilon}\) is modeled by the stationary nonlinear NSE\(_{\epsilon}\)
\begin{equation}\label{eq:microNSE}
	\begin{cases}
	\bm{u}_{\epsilon} \cdot \bm{\nabla}_{\bm{x}} \bm{u}_{\epsilon} - \nu \bm{\Delta}_{\bm{x}} \bm{u}_{\epsilon} = \bm{F} - \bm{\nabla}_{\bm{x}} p_{\epsilon} 
		\quad &\text{in } \Omega_{\epsilon}, \\
		\mathrm{div}_{\bm{x}} \bm{u}_{\epsilon} = 0 
		\quad &\text{in } \Omega_{\epsilon}, \\
		\bm{u}_{\epsilon} = \bm{0} 
		\quad &\text{on } \partial\Omega_{\epsilon}, 
	\end{cases}
\end{equation}  
where \(\bm{u}_{\epsilon}\colon \Omega_{\epsilon} \to \mathbb{R}^{d}\) denotes the velocity field, \(p_{\epsilon}\colon \Omega_{\epsilon} \to \mathbb{R}\) is the scalar-valued pressure, \(\bm{F} \in L^{2}\left(\Omega\right)^{d}\) defines a given force, and \(\nu > 0\) is a constant viscosity. 
Additionally, to comply with \eqref{eq:microSolidRemoval}, we define the extension to \(\Omega\) of a pair of solutions \(\left( \bm{u}_{\epsilon}, p_{\epsilon}\right)\) of \eqref{eq:microNSE} as
\begin{equation}\label{eq:microNSE_completion}
	\left( \tilde{\bm{u}}_{\epsilon}, \tilde{p}_{\epsilon}  \right) =
	\begin{cases}
	 \left( \bm{u}_{\epsilon} ,  p_{\epsilon} \right)  \quad & \text{in } \Omega_{\epsilon} ,  \\
	 \left( \bm{0}, \frac{1}{\vert C_{i}^{\epsilon}\vert} \int_{C_{i}^{\epsilon}} p_{\epsilon} \,\mathrm{d}\bm{x} \right) \quad & \text{in each obstacle } Y_{S,{i}}^{\epsilon},
	\end{cases}
\end{equation} 
where \(C_{i}^{\epsilon}\) denotes a control volume containing \(Y_{S,{i}}^{\epsilon}\) \cite{allaire1991homogenizationNSE}. 
Heuristically, this means setting inside the obstacle zero velocity and the average value of the pressure field in its immediate proximity.

Based on the above definitions, Allaire~\cite{allaire1991homogenizationNSE} (see Corollary~1.4 therein) proved homogenization limits for different obstacle sizes expressed in the ratio \eqref{eq:ratio}. 
The results of homogenizing \eqref{eq:microNSE} are summarized in the following statements which are recalled without proof. 
Let the index \(\cdot_{0}\) of a function space denote the classical vanishing trace operator, e.g.\ for \(X\subseteq \mathbb{R}^{d}\) let 
\begin{align}
H^{1}_{0} (X) = \left\{ f \in H^{1}(X) \;\left\vert\; f\vert_{\partial X} = 0 \right. \right\} .
\end{align}

\begin{theorem}\label{thmStatNSE}
According to the scaling of the obstacle size, we distinguish between three homogenization limits.
\begin{enumerate}
\item[(\underline{i})] If the obstacles are too small, i.e.\ \(\lim_{\epsilon\searrow 0} \sigma_{\epsilon} = + \infty\), then \(\left(\tilde{\bm{u}}_{\epsilon}, \tilde{p}_{\epsilon} \right)\) converges strongly in \(H_{0}^{1} \left( \Omega \right)^{d} \times L^{2}\left(\Omega\right) / \mathbb{R}\) to \(\left( \bm{u}, p \right)\), a solution of the stationary nonlinear NSE
\begin{equation} \label{eq:obstacleTooSmall}
\begin{cases}
	\bm{u} \cdot \bm{\nabla}_{\bm{x}} \bm{u} - \nu \bm{\Delta}_{\bm{x}} \bm{u} = \bm{F} - \bm{\nabla}_{\bm{x}} p  \quad &\textit{in } \Omega , \\
	\mathrm{div}_{\bm{x}} \bm{u} = 0 			 \quad &\textit{in } \Omega , \\
	\bm{u} = \bm{0} 						 \quad &\textit{on } \partial\Omega .
\end{cases} \quad 
\end{equation} 
\item[(\underline{ii})] If the obstacles have a critical size, i.e.\ \(\lim_{\epsilon\searrow 0} \sigma_{\epsilon} = \sigma > 0\), then \(\left( \tilde{\bm{u}}_{\epsilon}, \tilde{p}_{\epsilon}\right)\) converges weakly in \(H^{1}_{0} \left(\Omega \right)^{d} \times L^{2} \left(\Omega\right) / \mathbb{R}\) to \(\left( \bm{u}, p \right)\), a solution of the stationary nonlinear BL  
\begin{equation} \label{eq:obstacleCritical}
\begin{cases}
	\bm{u}\cdot \bm{\nabla}_{\bm{x}} \bm{u} - \nu \bm{\Delta}_{\bm{x}} \bm{u} + \frac{\nu}{\sigma^{2}} \mathbf{M} \bm{u} = \bm{F} - \bm{\nabla}_{\bm{x}} p  \quad &\textit{in } \Omega , \\
	\mathrm{div}_{\bm{x}} \bm{u} = 0 										   \quad &\textit{in } \Omega , \\
	\bm{u} = \bm{0} 													   \quad &\textit{on } \partial\Omega .
\end{cases} \quad 
\end{equation} 
\item[(\underline{iii})] If the obstacles are too big, i.e.\ \(\lim_{\epsilon \searrow 0} \sigma_{\epsilon} = 0\), then the rescaled solution \(\left( \frac{\tilde{\bm{u}}_{\epsilon}}{\sigma_{\epsilon}^{2}}, \tilde{p}_{\epsilon}\right)\) converges strongly in 
\(H^{1}_{\mathrm{div}}\left( \Omega \right)^{d} \times L^{2}\left(\Omega\right) / \mathbb{R}\) 
to \(\left( \bm{u}, p \right)\), the unique solution of the DL
\begin{equation} \label{eq:darcyLaw1}
\begin{cases}
	\bm{u} =  \frac{1}{\nu}  \mathbf{M}^{-1}\left( \bm{F} - \bm{\nabla}_{\bm{x}} p \right)  	 \quad &\textit{in } \Omega , \\
	\mathrm{div}_{\bm{x}} \bm{u} = 0 										 \quad &\textit{in } \Omega , \\
	\bm{u}\cdot\bm{n} = \bm{0} 													 \quad &\textit{on } \partial\Omega ,
\end{cases} 
\end{equation} 
where \(\bm{n}\in \mathbb{R}^{d}\) is the outward pointing normal vector.
\end{enumerate}
In all three regimes (\underline{i}--\underline{iii}), \(\mathbf{M}\) is a \(d \times d\) symmetric matrix, which depends only on the model obstacle \(Y^{m}_{S}\). 
\end{theorem}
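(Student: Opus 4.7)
The plan is to follow the energy method with oscillating test functions of Tartar--Murat, as extended by Allaire to the nonlinear stationary setting. I would treat the three regimes in parallel, the rescaling $\tilde{\bm{u}}_{\epsilon}/\sigma_{\epsilon}^{2}$ in case (iii) being forced by the perforated-domain Poincar\'{e} inequality $\|\bm{u}_{\epsilon}\|_{L^{2}(\Omega_{\epsilon})} \leq C\sigma_{\epsilon}\|\bm{\nabla}_{\bm{x}}\bm{u}_{\epsilon}\|_{L^{2}(\Omega_{\epsilon})}$, whose constant reflects how tightly the obstacles clamp the flow.

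I would begin with a priori estimates. Testing \eqref{eq:microNSE} with $\bm{u}_{\epsilon}$, exploiting the antisymmetry of the convective term, and combining with the above Poincar\'{e} inequality yields a uniform $H_{0}^{1}(\Omega)^{d}$ bound on $\tilde{\bm{u}}_{\epsilon}$ in regime (i), a uniform bound of the same type (with $\sigma = \lim_{\epsilon\searrow 0}\sigma_{\epsilon}$) in regime (ii), and an $L^{2}$ bound of order $\sigma_{\epsilon}^{2}$ on $\tilde{\bm{u}}_{\epsilon}$ in regime (iii). For the pressure I would construct a Tartar-type restriction operator $R_{\epsilon}\colon H_{0}^{1}(\Omega)^{d} \to H_{0}^{1}(\Omega_{\epsilon})^{d}$ that preserves divergence-free fields and has operator norm of order $\sigma_{\epsilon}^{-2}$; dualising $R_{\epsilon}$ then exhibits the extension \eqref{eq:microNSE_completion} as the natural one and furnishes a uniform $L^{2}(\Omega)/\mathbb{R}$ bound on $\tilde{p}_{\epsilon}$. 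Extraction of weakly convergent subsequences $(\bm{u},p)$ is thereafter routine.

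The technical heart is the construction of the oscillating test functions. For each basis vector $\bm{e}_{k}$ I would solve the exterior Stokes problem around the model obstacle $Y_{S}^{m}$,
\begin{equation*}
-\bm{\Delta}_{\bm{y}}\bm{v}_{k} + \bm{\nabla}_{\bm{y}}q_{k} = \bm{0}, \quad \mathrm{div}_{\bm{y}}\bm{v}_{k} = 0 \text{ in } \mathbb{R}^{d}\setminus Y_{S}^{m}, \quad \bm{v}_{k} = \bm{0} \text{ on } \partial Y_{S}^{m}, \quad \bm{v}_{k} \to \bm{e}_{k} \text{ at infinity},
\end{equation*}
and glue its rescalings via the homeomorphisms $\iota_{i}^{\epsilon}$, together with a partition-of-unity cutoff, into a global corrector $\bm{w}_{k} \in H^{1}(\Omega)^{d}$ which vanishes on every obstacle, is divergence-free, and converges weakly to $\bm{e}_{k}$. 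The key identity, obtained by integration by parts against the associated local pressures $q_{k}^{\epsilon}$, is
\begin{equation*}
-\nu\bm{\Delta}_{\bm{x}}\bm{w}_{k} + \bm{\nabla}_{\bm{x}}q_{k}^{\epsilon} \rightharpoonup \frac{\nu}{\sigma^{2}}\mathbf{M}\bm{e}_{k} \text{ in } H^{-1}(\Omega)^{d},
\end{equation*}
where $\mathbf{M}$ is the symmetric positive-definite matrix determined solely by $Y_{S}^{m}$ via $\mathbf{M}_{jk} = \int_{\mathbb{R}^{d}\setminus Y_{S}^{m}} \bm{\nabla}_{\bm{y}}\bm{v}_{j} : \bm{\nabla}_{\bm{y}}\bm{v}_{k}\,\mathrm{d}\bm{y}$. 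The prefactor of this strange term is inherited from $\sigma_{\epsilon}$: it vanishes when $\sigma_{\epsilon} \to \infty$ (case (i)), persists as the Brinkman correction when $\sigma_{\epsilon} \to \sigma$ (case (ii)), and dominates the diffusion under the $\sigma_{\epsilon}^{2}$ rescaling in case (iii), where a dominant balance against force and pressure identifies $\nu^{-1}\mathbf{M}^{-1}$ as the Darcy permeability.

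The hardest step will be passing to the limit in the convective term $\int_{\Omega}(\bm{u}_{\epsilon}\cdot\bm{\nabla}_{\bm{x}}\bm{u}_{\epsilon})\cdot\bm{w}_{k}\,\mathrm{d}\bm{x}$, since neither factor converges strongly in $H^{1}$. In regime (i) I would combine Rellich compactness of $\tilde{\bm{u}}_{\epsilon}$ in $L^{2}(\Omega)^{d}$ with the fact that $\bm{w}_{k}-\bm{e}_{k}$ is effectively supported in shrinking neighbourhoods of the obstacles, which suffices to identify the limit as $\int_{\Omega}(\bm{u}\cdot\bm{\nabla}_{\bm{x}}\bm{u})\cdot\bm{e}_{k}$. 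In the critical regime (ii) additional correctors absorbing the concentration of $\bm{\nabla}_{\bm{x}}\bm{w}_{k}$ near the obstacles are required, together with a compensated-compactness argument. In the Darcy regime (iii) the rescaling renders the convective contribution of order $\sigma_{\epsilon}^{2}$ relative to the leading viscous and pressure terms, so it drops out of the limit, which is linear; uniqueness then follows from the positive-definiteness of $\mathbf{M}$. Finally, the boundary conditions on $\partial\Omega$ are inherited from $\tilde{\bm{u}}_{\epsilon}$: full no-slip in (i)--(ii), and only the normal trace $\bm{u}\cdot\bm{n}=\bm{0}$ in (iii), consistent with the first-order structure of Darcy's law.
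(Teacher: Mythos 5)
The paper does not actually prove Theorem~\ref{thmStatNSE}: its ``proof'' is a one-line citation of Allaire's original work \cite{allaire1991homogenizationNSE}, where the result is established. Your proposal therefore does not diverge from the paper so much as supply the argument the paper delegates to the literature, and what you outline is indeed the correct strategy of that reference: the Tartar--Murat energy method with oscillating test functions, the perforated-domain Poincar\'{e} inequality with constant \(C\sigma_{\epsilon}\) driving both the a priori bounds and the rescaling in case (iii), the divergence-preserving restriction operator whose dual identifies the pressure extension \eqref{eq:microNSE_completion}, and the ``strange term'' \(\frac{\nu}{\sigma^{2}}\mathbf{M}\) emerging from the exterior Stokes problem around the model obstacle. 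Your local problem and your formula for \(\mathbf{M}\) agree with the paper's own Proposition~\ref{prop:locModProbSNSE} (equations \eqref{eq:localModelProb} and \eqref{eq:porosityMatrix}), modulo the notational swap of \(\bm{v}_{k}\) and \(\bm{w}_{k}\), and your account of how the prefactor interpolates between the three regimes is the right mechanism.

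That said, as a proof it remains a sketch with a few visible gaps. First, your far-field condition \(\bm{v}_{k}\to\bm{e}_{k}\) is only correct for \(d\geq 3\); for \(d=2\) the Stokes paradox forces the logarithmic behaviour \(\bm{v}_{k}\sim\bm{e}_{k}\log|\bm{x}|\) recorded in \eqref{eq:localModelProb}, and \(\mathbf{M}=4\pi\mathbf{I}_{d}\) must be derived separately; the theorem is stated for \(d\geq 2\), so this case cannot be skipped. Second, you extract weakly convergent subsequences but the theorem asserts \emph{strong} \(H_{0}^{1}\) convergence in case (i) and strong convergence of the rescaled velocity in case (iii); upgrading weak to strong convergence (via an energy-identity argument) is a genuine step you do not address, and it is precisely the point where cases (i) and (ii) differ in the statement. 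Third, the passage to the limit in the convective term in the critical regime --- which you correctly identify as the hardest step --- is only named (``additional correctors \ldots compensated compactness'') rather than carried out, and the nonlinear analysis in Allaire's work carries dimension restrictions and, for uniqueness, smallness conditions on the data that your sketch does not mention. None of these is a wrong turn, but each would need to be filled in before the outline becomes a proof.
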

\begin{proof}
Proofs for all cases are provided in \cite{allaire1991homogenizationNSE}. 
\end{proof}

The porosity matrix \(\mathbf{M}\), which inversely represents a permeability tensor (see below Theorem~\ref{thm:lowVolumeFrac}), is computable via a model problem defined locally around \(Y^{m}_{S}\) (see Proposition~1.2 in \cite{allaire1991homogenizationNSE} and Proposition~1.3.2 in \cite{allaire2010homogenizationLec2}). 
The following result, obtained from merging Proposition~1.2 in \cite{allaire1991homogenizationNSE} and Proposition~1.3.2 in \cite{allaire2010homogenizationLec2}, unfolds the computation of \(\mathbf{M}\). 

\begin{proposition}\label{prop:locModProbSNSE}
Let \(\left\{ \bm{e}_{k} \right\}_{1\leq k\leq d}\) denote the unit basis of \(\mathbb{R}^{d}\). 
Hence, the local model problem is defined for each \(k\) as 
\begin{equation}\label{eq:localModelProb}
\begin{cases}
\bm{\nabla}_{\bm{x}} q_{k} - \bm{\Delta}_{\bm{x}} \bm{w}_{k} = \bm{0} \quad & \text{in } \mathbb{R}^{d} \setminus Y^{m}_{S} , \\
\mathrm{div}_{\bm{x}}  \bm{w}_{k} = \bm{0} \quad & \text{in } \mathbb{R}^{d} \setminus Y^{m}_{S} , \\
\bm{w}_{k} = \bm{0} \quad & \text{on } \partial Y^{m}_{S} , \\
\bm{w}_{k}  
	\begin{cases}
	\to \bm{e}_{k} \quad & \text{for } d \geq 3  \\
	\sim \bm{e}_{k} \log \left( \vert \bm{x} \vert \right) \quad & \text{for } d = 2 \\
	\end{cases}
\quad & \text{as } \vert \bm{x} \vert \to \infty .
\end{cases}
\end{equation}  
The matrix \(\mathbf{M}\) is then assembled through 
\begin{equation} \label{eq:porosityMatrix}
 \mathbf{M}  = 
\begin{cases}
\begin{bmatrix}
\int_{\mathbb{R}^{d} \setminus Y_{S}} \bm{\nabla}_{\bm{x}} \bm{w}_{k} \cdot \bm{\nabla}_{\bm{x}} \bm{w}_{j} \,\mathrm{d}\bm{x}
\end{bmatrix}
_{1\leq j, k \leq d} \quad & \text{for } d \geq 3 , \\
4 \pi \mathbf{I}_{d} \quad & \text{for } d = 2 .
\end{cases}
\end{equation} 
\end{proposition}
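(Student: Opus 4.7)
The plan is to derive the explicit formula for $\mathbf{M}$ by first solving the auxiliary exterior problem \eqref{eq:localModelProb}, then tying its solutions to the cell correctors that appear in the homogenization of \eqref{eq:microNSE}, and finally identifying the quadratic form $M_{jk}$ via an energy computation. I would proceed in four steps.

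First, I would establish well-posedness of the exterior Stokes problem \eqref{eq:localModelProb}. For $d\geq 3$, I decompose $\bm{w}_k = \bm{e}_k + \tilde{\bm{w}}_k$ with $\tilde{\bm{w}}_k$ vanishing at infinity, and solve the resulting inhomogeneous Dirichlet problem in a weighted (Deny--Lions type) Sobolev space on $\mathbb{R}^d \setminus Y^m_S$ by Lax--Milgram on the divergence-free subspace after subtracting a suitable lifting. Classical exterior Stokes theory yields existence, uniqueness, and decay estimates, in particular $\bm{\nabla}_{\bm{x}} \bm{w}_k \in L^2(\mathbb{R}^d \setminus Y^m_S)^{d\times d}$, which is precisely the integrability needed to make \eqref{eq:porosityMatrix} finite.

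Second, I would connect $(\bm{w}_k, q_k)$ to the cell-scale flow in $\Omega_\epsilon$. Following the test-function construction used to prove Theorem~\ref{thmStatNSE}, I assemble rescaled and translated correctors $\bm{w}_k^\epsilon$ cell by cell from $\bm{w}_k\bigl((\bm{x}-\bm{x}_i^\epsilon)/a_\epsilon\bigr)$, cut off so as to reproduce the no-slip condition on $\partial Y_{S,i}^\epsilon$ and the asymptotic value $\bm{e}_k$ away from each obstacle. The scaling by $a_\epsilon$ and $\epsilon$ makes the ratio $\sigma_\epsilon$ defined in \eqref{eq:ratio} appear naturally when integrating $|\bm{\nabla}_{\bm{x}} \bm{w}_k^\epsilon|^2$ over a single cell.

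Third, I would identify the matrix entries via an energy duality pairing. Testing the momentum equation of \eqref{eq:localModelProb} for $\bm{w}_k$ against $\bm{w}_j$ and integrating by parts on $\mathbb{R}^d \setminus Y^m_S$, the pressure term drops out by $\mathrm{div}_{\bm{x}}\bm{w}_j = 0$ together with the no-slip condition on $\partial Y^m_S$, and in $d\geq 3$ the boundary contribution at infinity vanishes by the decay of $\tilde{\bm{w}}_k$ and $q_k$. This yields the symmetric Dirichlet energy representation
\begin{align}
M_{jk} \;=\; \int_{\mathbb{R}^d \setminus Y^m_S} \bm{\nabla}_{\bm{x}} \bm{w}_k \cdot \bm{\nabla}_{\bm{x}} \bm{w}_j \,\mathrm{d}\bm{x},
\end{align}
and a matching argument shows that this quantity agrees cell by cell with the oscillatory energy of the $\bm{w}_k^\epsilon$ in the limit $\epsilon \searrow 0$, giving exactly the coefficient $(\nu/\sigma^2)\mathbf{M}$ appearing in \eqref{eq:obstacleCritical}.

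The main obstacle will be the two-dimensional case. Stokes' paradox implies that the exterior problem with bounded velocity at infinity admits no solution, which is why \eqref{eq:localModelProb} prescribes the logarithmic growth $\bm{w}_k \sim \bm{e}_k \log|\bm{x}|$. Consequently, the Dirichlet integral diverges and \eqref{eq:porosityMatrix} cannot be evaluated term by term. Instead the coefficient must be extracted from the far-field Oseen-type expansion of $\bm{w}_k$, whose Stokeslet asymptotics give the universal, obstacle-independent value $\mathbf{M} = 4\pi\mathbf{I}_d$. Making this renormalization rigorous, and in particular justifying that the result is isotropic irrespective of the shape of $Y^m_S$, is the delicate part of the argument.
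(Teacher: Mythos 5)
The paper does not actually prove this proposition: it is recalled, with citations, from Allaire's Proposition~1.2 in \cite{allaire1991homogenizationNSE} and Proposition~1.3.2 in \cite{allaire2010homogenizationLec2}, so there is no internal argument to compare against. Your sketch is a reasonable reconstruction of what those references do (exterior Stokes problem in a homogeneous Sobolev setting, rescaled oscillating test functions, energy identification of the Brinkman matrix, Stokes-paradox renormalization in $d=2$), but as written it is a roadmap rather than a proof, and the two steps you gloss over are exactly where the content lies.

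First, in your Step~3 the boundary term at infinity does not vanish merely ``by the decay of $\tilde{\bm{w}}_k$ and $q_k$'': since $q_k = \mathcal{O}(\vert\bm{x}\vert^{1-d})$ while $\bm{w}_j \to \bm{e}_j$ does not decay, the flux $\int_{\vert\bm{x}\vert = R} q_k\, \bm{w}_j\cdot\bm{n}\,\mathrm{d}S$ is a priori $\mathcal{O}(1)$; it vanishes only because $\int_{\vert\bm{x}\vert = R} \bm{e}_j\cdot\bm{n}\,\mathrm{d}S = 0$ and the next order of the Stokeslet expansion decays. Note also that the identity $M_{jk}=\int \bm{\nabla}_{\bm{x}}\bm{w}_k\cdot\bm{\nabla}_{\bm{x}}\bm{w}_j\,\mathrm{d}\bm{x}$ is the \emph{definition} given in the statement, not a conclusion; what the pairing actually establishes is that this Dirichlet energy equals the drag exerted on $Y_S^m$, which is why it appears as the Brinkman coefficient in \eqref{eq:obstacleCritical}. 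Second, the genuinely difficult parts --- showing that the cell-by-cell energies of your correctors $\bm{w}_k^\epsilon$ converge to $(\nu/\sigma^2)\mathbf{M}$, and carrying out the $d=2$ renormalization that yields the obstacle-independent value $4\pi\mathbf{I}_d$ --- are precisely the items you defer, and they constitute essentially all of Allaire's proof. Your proposal is therefore a correct pointer to the cited literature rather than a self-contained argument; for the purposes of this paper, the citation \emph{is} the proof.
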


\begin{remark}
Note that the standard derivation of the DL uses the assumption that the obstacle size \(a_{\epsilon} = \mathcal{O} (\epsilon) \). 
Presently, so far we have assumed a smaller obstacle size. 
Hence, the typical permeability tensor (often referred to as \(\mathbf{K}\)) is computed from a different model problem as the local model problem~\eqref{eq:localModelProb}. 
Allaire~\cite{allaire1991continuity} closely examines the relation of permeability tensors and porosity matrices, and states the following result. 
\end{remark}
 
Let the obstacle size be redefined as \(a_{\epsilon} \coloneqq \delta \epsilon = \mathcal{O} \left( \epsilon \right)\). 
Let \(\iota_{i}^{\epsilon}\) define a linear homeomorphism, mapping each cell to the unit cell \(Y = \left(0, 1\right)^d\) and allocating solid and fluid parts therein, \(Y_{S} = \iota_{i}^{\epsilon}\left( Y_{S,{i}}^{\epsilon}\right)\) and \(Y_{F} = \iota_{i}^{\epsilon}\left( Y_{F,{i}}^{\epsilon} \right)\), respectively. 
Hence, the unit cell \(Y\) now is split into a fluid part \(Y_{F} = Y \setminus Y_{S}\) and an obstacle \( Y_{S}\) which is of size \(\delta > 0\) due to \(\iota_{i}^{\epsilon}\) resembling a rescaling with a homothety factor of \(\epsilon^{-1}\) \cite{mikelic1994mathematical}. 
The following theorem states the outcome of the homogenization in this case. 

\begin{theorem}\label{thmStatNSECase4}
An extension \(\left( \tilde{\bm{u}}_{\epsilon}, \tilde{p}_{\epsilon}\right) \) of the solution \(\left(\bm{u}_{\epsilon}, p_{\epsilon}\right)\) of \eqref{eq:microNSE} exists, such that \(\tilde{\bm{u}}_{\epsilon}\) converges weakly in \(L^{2}\left( \Omega\right)^{d}\) to \(\bm{u}\), and \(\tilde{p}_{\epsilon}\) converges strongly in \(L^{q^{\prime}} \left( \Omega\right) / \mathbb{R}\) to \(p\), for any \(1 < q^{\prime} <\beta\), where \(\left( \bm{u},  p\right) \) is the unique solution of the DL
\begin{equation}\label{eq:darcyLaw2}
\begin{cases}
	\bm{u} =  \frac{1}{\nu} \mathbf{A} \left( \bm{F} - \bm{\nabla}_{\bm{x}} p \right)  	 \quad &\textit{in } \Omega , \\
	\mathrm{div}_{\bm{x}} \bm{u} = 0 										 \quad &\textit{in } \Omega , \\
	\bm{u} \cdot \bm{n} = \bm{0} 													 \quad &\textit{on } \partial\Omega .
\end{cases} 
\end{equation} 
In the DL \eqref{eq:darcyLaw2}, the porosity matrix \(\mathbf{A}\) is defined by 
\begin{equation} \label{eq:permeabilityTensor}
 \mathbf{A}  = \left[\;
 \int_{Y_{F}} \bm{\nabla}_{\bm{x}} \bm{v}_{k} \cdot \bm{\nabla}_{\bm{x}} \bm{v}_{j} \,\mathrm{d}\bm{x} \;\right]_{1\leq j, k \leq d}, 
\end{equation} 
where for the canonical basis vector \(\bm{e}_{k}\), \(1 \leq k \leq d\), of \(\mathbb{R}^{d}\), \(\bm{v}_{k}\) is the unique solution in \(H^{1}_{\#}\left( Y_{F}\right)^{d} \) of the unit cell problem 
\begin{equation}\label{unitCellProb}
\begin{cases}
\bm{\nabla}_{\bm{x}} p_{k} - \bm{\Delta}_{\bm{x}} \bm{v}_{k} = \bm{e}_{k} \quad & \text{in } Y_{F} , \\
\mathrm{div}_{\bm{x}} \bm{v}_{k} = 0 \quad & \text{in }  Y_{F} , \\
\bm{v}_{k} = \bm{0} \quad & \text{on } \partial \left( Y_{S}\right) , 
\end{cases}
\end{equation} 
where \(H^{1}_{\#}\left( Y_{F}\right)\) denotes the Sobolev space of \(Y_{F}\)-periodic functions in \(H^{1}\left(Y_{F}\right)\). 
\end{theorem}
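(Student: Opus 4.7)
The plan is to follow the Tartar oscillating test function method as adapted to perforated domains by Allaire and Mikelić. I will first establish uniform a priori bounds on $(\bm{u}_\epsilon, p_\epsilon)$, then construct a family of oscillating test functions from the unit cell solutions \eqref{unitCellProb}, and finally pass to the limit in the weak formulation to identify \eqref{eq:darcyLaw2}.

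First I would multiply \eqref{eq:microNSE} by $\bm{u}_\epsilon$ and integrate over $\Omega_\epsilon$ to obtain the energy identity $\nu\|\bm{\nabla}_{\bm{x}}\bm{u}_\epsilon\|_{L^2(\Omega_\epsilon)}^2 = \int_{\Omega_\epsilon}\bm{F}\cdot\bm{u}_\epsilon\,\mathrm{d}\bm{x}$, the convective and pressure terms vanishing by the divergence-free and no-slip conditions. Because every fluid cell $Y_{F,i}^\epsilon$ contains an obstacle of comparable size $\delta\epsilon$, a cellwise Poincaré inequality gives $\|\bm{u}_\epsilon\|_{L^2(\Omega_\epsilon)} \leq C\epsilon\|\bm{\nabla}_{\bm{x}}\bm{u}_\epsilon\|_{L^2(\Omega_\epsilon)}$ with $C$ independent of $\epsilon$. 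Combining the two yields $\|\bm{\nabla}_{\bm{x}}\bm{u}_\epsilon\|_{L^2} = \mathcal{O}(\epsilon)$ and $\|\bm{u}_\epsilon\|_{L^2} = \mathcal{O}(\epsilon^2)$, and after the rescaling implicit in \eqref{eq:darcyLaw2} the trivial extension $\tilde{\bm{u}}_\epsilon$ is uniformly bounded in $L^2(\Omega)^d$ and admits a weak limit $\bm{u}$ along a subsequence.

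Second, for each $k \in \{1,\dots,d\}$ I let $(\bm{v}_k, p_k)$ solve \eqref{unitCellProb} and define the oscillating pair $\bm{w}_\epsilon^k(\bm{x}) = \epsilon^2\bm{v}_k(\bm{x}/\epsilon)$, $\pi_\epsilon^k(\bm{x}) = \epsilon p_k(\bm{x}/\epsilon)$, extended by zero inside the obstacles. A direct rescaling shows that this pair satisfies $\bm{\nabla}_{\bm{x}}\pi_\epsilon^k - \bm{\Delta}_{\bm{x}}\bm{w}_\epsilon^k = \bm{e}_k$ on the fluid phase and vanishes on the solid phase. Testing \eqref{eq:microNSE} against $\varphi\bm{w}_\epsilon^k$ with $\varphi \in C_c^\infty(\Omega)$, the convective contribution is bounded by $\|\bm{u}_\epsilon\|_{L^2}^2\|\bm{\nabla}_{\bm{x}}(\varphi\bm{w}_\epsilon^k)\|_{L^\infty} = \mathcal{O}(\epsilon^5)$ and drops out, while substituting the dual equation for $(\bm{w}_\epsilon^k,\pi_\epsilon^k)$ transfers the Laplacian onto the test function and produces in the limit the identity $\int_\Omega \bm{u}\cdot\bm{e}_k\varphi\,\mathrm{d}\bm{x} = \nu^{-1}\int_\Omega (\mathbf{A})_{kj}(\bm{F}_j - \partial_j p)\varphi\,\mathrm{d}\bm{x}$, which is the weak form of \eqref{eq:darcyLaw2}. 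The tensor $\mathbf{A}$ is identified with \eqref{eq:permeabilityTensor} by testing \eqref{unitCellProb} against $\bm{v}_j$, and its symmetry and positive-definiteness yield uniqueness of the limit problem and hence convergence of the whole sequence rather than only a subsequence.

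The hard part will be the pressure step: extending $p_\epsilon$ into the obstacles via the cell averages \eqref{eq:microNSE_completion} and proving uniform bounds in $L^{q'}(\Omega)/\mathbb{R}$ with the improved integrability threshold $\beta > 1$. This requires constructing an Allaire-type restriction operator $R_\epsilon$ from $W_0^{1,q}(\Omega)^d$ to the corresponding space on $\Omega_\epsilon$ that preserves divergence and whose operator norm is independent of $\epsilon$. Duality against the momentum equation then produces a uniform pressure estimate; a Meyers-type higher integrability estimate supplies the exponent $\beta$, and strong convergence in $L^{q'}$ is extracted from compactness of $\bm{\nabla}_{\bm{x}}\tilde{p}_\epsilon$ in a negative Sobolev norm together with a div-curl argument. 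Once both limits are in hand, the nonlinear convective term is controlled by the a priori bounds established above and the proof reduces to the classical Tartar identification already carried out for the linear Stokes case in \cite{allaire1991continuity,mikelic1994mathematical}.
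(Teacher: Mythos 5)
Your proposal is mathematically sound as an outline, but it takes a genuinely different route from the paper: the paper does not prove this theorem at all in the analytic sense --- its proof consists of identifying the statement as a special case of Theorem~1.2.5 in Allaire's lecture notes (which restates Mikeli\'{c}'s rigorous result) and fixing the constants therein as \(\gamma = 4\) and \(\beta > 1\). What you have written is essentially a reconstruction of the argument that lives inside those cited references: energy estimate plus the cellwise Poincar\'{e} inequality \(\lVert \bm{u}_\epsilon\rVert_{L^2} \leq C\epsilon\lVert \bm{\nabla}_{\bm{x}}\bm{u}_\epsilon\rVert_{L^2}\), Tartar-type oscillating test functions \(\bm{w}^k_\epsilon = \epsilon^2\bm{v}_k(\cdot/\epsilon)\) built from the unit cell problem, the restriction/extension operator for the pressure, and the observation that the convective term is of higher order (\(\mathcal{O}(\epsilon^5)\) against the \(\mathcal{O}(\epsilon^2)\) leading terms) so the nonlinear problem reduces to the linear Stokes homogenization. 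Your approach buys a self-contained and checkable derivation, including the correct identification \(\mathbf{A}_{kj} = \int_{Y_F}\bm{\nabla}_{\bm{x}}\bm{v}_k\cdot\bm{\nabla}_{\bm{x}}\bm{v}_j\,\mathrm{d}\bm{x} = \int_{Y_F}(v_j)_k\,\mathrm{d}\bm{y}\) via testing the cell problem, and whole-sequence convergence from uniqueness of the Darcy limit; the paper's citation buys brevity and defers the genuinely delicate pressure analysis (the uniform \(L^{q'}\) bound with the Meyers exponent \(\beta\)) to Mikeli\'{c}, which is exactly the step you correctly flag as the hard part but do not carry out. One caveat you should make explicit rather than leave implicit: since \(\lVert\bm{u}_\epsilon\rVert_{L^2} = \mathcal{O}(\epsilon^2)\), the trivial extension converges weakly to zero unless the velocity is rescaled by \(\epsilon^{-2}\) (equivalently, the data or viscosity are rescaled); the theorem as stated suppresses this normalization, and your phrase ``after the rescaling implicit in \eqref{eq:darcyLaw2}'' should be turned into a precise statement of which quantity is divided by \(\epsilon^2\) before passing to the limit.
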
 
\begin{proof}
The theorem is a special case of Theorem~1.2.5 in \cite{allaire2010homogenizationLec2} which restates the rigorous result of Mikeli\'{c}~\cite{mikelic1991homogenization}. 
Hence, we choose the specific constants in Theorem~1.2.5 from \cite{allaire2010homogenizationLec2} as \(\gamma = 4\) and \(\beta >1\) which completes the proof. 
\end{proof}

Further, the continuity in the low volume fraction limit (\(\delta \searrow 0\)) is verified through the following theorem, which links the permeability tensor \(\mathbf{A}\) \eqref{eq:permeabilityTensor} in the DL \eqref{eq:darcyLaw2} to the porosity matrix \(\mathbf{M}\) \eqref{eq:porosityMatrix} in the DL \eqref{eq:darcyLaw1}. 
\begin{theorem}\label{thm:lowVolumeFrac}
Let \(\left(   p_{k}, \bm{v}_{k} \right)\) be the unique solution of the unit cell problem. 
Rescaling it, for \(\bm{x}\in \delta^{-1} \left( Y \setminus Y_{S}\right)\), we can define 
\begin{align}
\bm{v}_{k}^{\delta} \left( \bm{x} \right) & = \delta^{d-2} \bm{v}_{k}\left( \delta \bm{x}\right), \\
p_{k}^{\delta}\left( \bm{x}\right) & = \delta^{d-1} p_{k}\left( \delta \bm{x}\right). 
\end{align} 
Further, let \(\left( q_{i}, \bm{w}_{i}\right) \) be the unique solution of the local model problem. 
Then \(\left( p_{k}^{\delta}, \bm{v}_{k}^{\delta} \right)\) converges weakly to
\begin{equation}
\sum\limits_{i=1}^{d} \left( \bm{e}_{i}^{\mathrm{T}} \mathbf{M}^{-1} \bm{e}_{k} \right) \left( q_{i}, \bm{w}_{i}\right)
\end{equation} 
in \( \left[ L_{\mathrm{loc}}^{2} \left( \mathbb{R}^{d} \setminus Y_{S} \right) / \mathbb{R} \right] \times \left[ H_{\mathrm{loc}}^{1} \left( \mathbb{R}^{d} \setminus Y_{S} \right) \right]^{d}\). 
Additionally, the low volume fraction limit for the permeability tensor is given as 
\begin{equation}
\begin{cases}
\lim\limits_{\delta \searrow 0} \delta^{d-2}\mathbf{A}\left(\delta\right) =\mathbf{M}^{-1}, \quad &\textit{for } d\geq 3, \\
\lim\limits_{\delta \searrow 0} \frac{1}{\vert \log \delta \vert } \mathbf{A}\left( \delta\right) = \mathbf{M}^{-1}, \quad &\textit{for } d= 2.
\end{cases}
\end{equation} 
\end{theorem}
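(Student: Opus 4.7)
The plan is to prove Theorem~\ref{thm:lowVolumeFrac} in two stages: first establish the weak convergence of the rescaled pair $(p_k^{\delta}, \bm{v}_k^{\delta})$, then deduce the permeability tensor limit by a change of variables in the definition~\eqref{eq:permeabilityTensor}. To set the stage, I would substitute $\bm{y} = \delta \bm{x}$ into the unit cell problem~\eqref{unitCellProb}. A direct chain-rule calculation gives $\bm{\nabla}_{\bm{x}}\bm{v}_k^{\delta} = \delta^{d-1}\bm{\nabla}\bm{v}_k(\delta\bm{x})$, $\bm{\Delta}_{\bm{x}}\bm{v}_k^{\delta} = \delta^{d}\bm{\Delta}\bm{v}_k(\delta\bm{x})$, and an analogous identity for $p_k^{\delta}$. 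Multiplying~\eqref{unitCellProb} by $\delta^{d}$ then produces
\begin{equation*}
\bm{\nabla}_{\bm{x}} p_k^{\delta} - \bm{\Delta}_{\bm{x}}\bm{v}_k^{\delta} = \delta^{d}\bm{e}_k, \qquad \mathrm{div}_{\bm{x}}\bm{v}_k^{\delta} = 0,
\end{equation*}
posed on the dilated fluid cell $\delta^{-1}Y_F$, with $\bm{v}_k^{\delta} = \bm{0}$ on the rescaled obstacle boundary---coinciding with $\partial Y_S^{m}$ up to similarity---and $Y$-periodicity on $\partial(\delta^{-1}Y)$, whose diameter grows like $\delta^{-1}$ as $\delta\searrow 0$.

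Next I would establish a priori bounds uniform in $\delta$. Testing the rescaled Stokes system against $\bm{v}_k^{\delta}$ itself, combined with the local Poincar\'{e} inequality for functions vanishing on $\partial Y_S^{m}$, delivers $\|\bm{\nabla}\bm{v}_k^{\delta}\|_{L^{2}(K)} \leq C_K$ on every compact $K \subset \mathbb{R}^{d}\setminus Y_S^{m}$, together with the corresponding pressure bound modulo constants. Weak compactness then extracts a subsequence converging weakly in $H^{1}_{\mathrm{loc}}(\mathbb{R}^{d}\setminus Y_S^{m})^{d} \times L^{2}_{\mathrm{loc}}(\mathbb{R}^{d}\setminus Y_S^{m})/\mathbb{R}$ to some limit $(\bm{v}_k^{\ast}, p_k^{\ast})$, which---because $\delta^{d}\bm{e}_k \to \bm{0}$---satisfies the homogeneous Stokes system of the local model problem~\eqref{eq:localModelProb}. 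To pin down the coefficients, I would use that $\bm{w}_1,\dots,\bm{w}_d$ span the solution space with the admissible far-field class, so that $\bm{v}_k^{\ast} = \sum_{i} c_{ik}\bm{w}_i$ uniquely, and likewise $p_k^{\ast} = \sum_i c_{ik} q_i$. The $c_{ik}$ are then determined by matching the periodic far-field behavior of $\bm{v}_k^{\delta}$ on the receding cell boundary $\partial(\delta^{-1}Y)$ against the ansatz $\bm{w}_i \to \bm{e}_i$ at infinity; following the analysis of Allaire~\cite{allaire1991continuity}, this matched asymptotic procedure returns $c_{ik} = \bm{e}_i^{\mathrm{T}}\mathbf{M}^{-1}\bm{e}_k$.

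The tensor identity is then a short calculation. Changing variables in~\eqref{eq:permeabilityTensor} yields
\begin{equation*}
\delta^{d-2}\mathbf{A}_{jk}(\delta) = \int_{\delta^{-1}Y_F} \bm{\nabla}_{\bm{x}}\bm{v}_k^{\delta} \cdot \bm{\nabla}_{\bm{x}}\bm{v}_j^{\delta}\,\mathrm{d}\bm{x},
\end{equation*}
and passing to the limit via the established weak convergence together with the definition $\mathbf{M}_{li} = \int_{\mathbb{R}^{d}\setminus Y_S^{m}}\bm{\nabla}\bm{w}_i \cdot \bm{\nabla}\bm{w}_l\,\mathrm{d}\bm{x}$ from~\eqref{eq:porosityMatrix} gives
\begin{equation*}
\lim_{\delta\searrow 0}\delta^{d-2}\mathbf{A}_{jk}(\delta) = \sum_{i,l}(\mathbf{M}^{-1})_{ik}(\mathbf{M}^{-1})_{lj}\mathbf{M}_{li} = (\mathbf{M}^{-1})_{jk},
\end{equation*}
where the final equality uses the symmetry of $\mathbf{M}$ together with $\mathbf{M}^{-1}\mathbf{M} = \mathbf{I}$. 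The case $d = 2$ follows along identical lines but with the normalization $1/|\log\delta|$ replacing $\delta^{d-2}$, accounting for the logarithmic far-field behavior prescribed in~\eqref{eq:localModelProb}. I expect the main obstacle to be the matched-asymptotic identification of the $c_{ik}$: one must carefully analyze the intermediate annular zone between the obstacle and the receding cell boundary, where neither the inner (local-problem) nor the outer (unit-cell-averaged) description is individually valid; in $d=2$ the logarithmic far-field growth further forces a nonstandard rescaling and the invocation of the Fredholm solvability condition for exterior Stokes flow in the plane.
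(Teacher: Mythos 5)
First, for context: the paper does not prove this theorem at all---its ``proof'' is a one-line citation of Allaire's Theorem~3.1 in \cite{allaire1991continuity}. Your sketch therefore attempts strictly more than the paper does, and its broad architecture (rescale the cell problem so the obstacle becomes the fixed model obstacle and the cell boundary recedes to infinity, extract a weakly convergent subsequence solving the homogeneous exterior Stokes system, identify the limit as a linear combination of the $\bm{w}_i$, then change variables in the energy integral) is indeed the skeleton of Allaire's argument. The rescaling algebra is correct: $\bm{\nabla}_{\bm{x}} p_k^{\delta} - \bm{\Delta}_{\bm{x}}\bm{v}_k^{\delta} = \delta^{d}\bm{e}_k$ and $\delta^{d-2}A_{jk}(\delta) = \int_{\delta^{-1}Y_F}\bm{\nabla}\bm{v}_k^{\delta}\cdot\bm{\nabla}\bm{v}_j^{\delta}\,\mathrm{d}\bm{x}$ both check out, and the final matrix identity $\sum_{i,l}(\mathbf{M}^{-1})_{ik}M_{il}(\mathbf{M}^{-1})_{lj} = (\mathbf{M}^{-1})_{jk}$ is right.

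However, two steps in your sketch are genuine gaps rather than routine verifications. First, the identification $c_{ik} = \bm{e}_i^{\mathrm{T}}\mathbf{M}^{-1}\bm{e}_k$ is the crux of the whole theorem, and you resolve it by ``following the analysis of Allaire''---i.e., by citing the very proof you are reconstructing. A self-contained argument must actually produce the matching condition: one tests the rescaled problem against $\bm{w}_i$, integrates by parts, and uses the normalization $\bm{w}_i \to \bm{e}_i$ (resp.\ the logarithmic behavior in $d=2$) together with $M_{ij} = \int \bm{\nabla}\bm{w}_i\cdot\bm{\nabla}\bm{w}_j$ to close a linear system for the $c_{ik}$; without this the claimed coefficients are unproven. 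Second, the tensor limit does not follow from weak convergence alone: $\int_{\delta^{-1}Y_F}\bm{\nabla}\bm{v}_k^{\delta}\cdot\bm{\nabla}\bm{v}_j^{\delta}$ is a quadratic form evaluated over a domain that expands to all of $\mathbb{R}^{d}\setminus Y_S^{m}$, so weak $H^1_{\mathrm{loc}}$ convergence gives at best lower semicontinuity on compact sets and says nothing about the mass escaping to infinity. One needs either strong local convergence of the gradients (obtainable from the equation via elliptic regularity) combined with uniform decay estimates at infinity, or---as Allaire does---a rewriting of the energy via the weak formulation, $\delta^{d-2}A_{jk}(\delta) = \delta^{d}\int_{\delta^{-1}Y_F}\bm{e}_k\cdot\bm{v}_j^{\delta}$, which converts the quadratic expression into a linear one amenable to weak limits. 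Relatedly, your a priori bound ``testing against $\bm{v}_k^{\delta}$ itself'' is circular as stated, since uniform boundedness of $\|\bm{\nabla}\bm{v}_k^{\delta}\|_{L^2(\delta^{-1}Y_F)}^2 = \delta^{d-2}A_{kk}(\delta)$ is essentially the conclusion; it must instead be derived from a Poincar\'{e} inequality in the perforated cell whose constant is tracked explicitly in $\delta$.
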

\begin{proof}
The theorem is proven by Allaire~\cite{allaire1991continuity} (see Theorem~3.1 therein). 
\end{proof}

\begin{remark}
Thus, for the complete range \(a_{\epsilon} \leq \mathcal{O}\left( \epsilon\right) \), the homogenized stationary equations~\eqref{eq:obstacleTooSmall},~\eqref{eq:obstacleCritical},~\eqref{eq:darcyLaw1} and~\eqref{eq:darcyLaw2} are obtained as limits of~\eqref{eq:microNSE} and~\eqref{eq:microNSE_completion}.
Specifically, the case \(a_{\epsilon} < \mathcal{O}\left( \epsilon\right) \) is covered in Theorem~\ref{thmStatNSE}~via (i--iii), and the complementary case~(iv) \(a_{\epsilon} = \mathcal{O}\left( \epsilon\right) \), via Theorem~\ref{thm:lowVolumeFrac}.
\end{remark}

\subsection{Homogenized nonstationary Navier--Stokes equations}
Let the domain be defined as above and \(d\in \left\{ 2, 3 \right\}\). 
The incompressible fluid flow, now being dependent on time \(t \in I = \left( 0, T\right)\), is governed by the nonstationary nonlinear NSE\(_{\epsilon}\)
\begin{equation}\label{eq:microEvoNSE}
	\begin{cases}
		\partial_{t} \bm{u}_{\epsilon} + \epsilon^{4} \bm{u}_{\epsilon} \cdot \bm{\nabla}_{\bm{x}} \bm{u}_{\epsilon} - \epsilon^{2} \nu \bm{\Delta}_{\bm{x}} \bm{u}_{\epsilon} = \bm{F} - \bm{\nabla}_{\bm{x}} p_{\epsilon}
		\quad &\text{in } \Omega_{\epsilon} \times I, \\
		\mathrm{div}_{\bm{x}} \bm{u}_{\epsilon} = 0 
		\quad &\text{in } \Omega_{\epsilon} \times I, \\
		\bm{u}_{\epsilon} \vert_{t=0} = \bm{u}_{0, \epsilon}
		\quad &\text{in } \Omega_{\epsilon}, \\
		\bm{u}_{\epsilon} = \bm{0} 
		\quad &\text{on } \partial\Omega_{\epsilon} \times I, 
	\end{cases}
\end{equation}  
where \(\bm{u}_{\epsilon}\colon \Omega_{\epsilon} \times I \to \mathbb{R}^{d}\) denotes the velocity field, \(p_{\epsilon}\colon \Omega_{\epsilon} \times I \to \mathbb{R}\) is the scalar-valued pressure, \(\bm{F} \in L^{2} (I; L^{2} \left( \Omega_{\epsilon}\right)^{d})\) defines a given force, \(\nu > 0\) is a constant viscosity, and \(\partial \Omega_{\epsilon}\) is supposed to be sufficiently regular. 
\begin{remark}
Note that the individual terms of \eqref{eq:microEvoNSE} are properly rescaled by prefactors of \(\epsilon\) to ensure a non-vanishing limit velocity \cite{allaire2010homogenizationLec2}. 
\end{remark} 
Further, following \cite{feireisl2016homogenization}, let 
\begin{equation}\label{eq:initField}
\begin{cases}
\bm{u}_{0, \epsilon}  \in L^{2}\left( \Omega_{\epsilon} \right)^{d}, \\
\mathrm{div}_{\bm{x}} \bm{u}_{0,\epsilon} = 0 \quad & \text{in } \Omega_{\epsilon}, \\
\bm{u}_{0,\epsilon} \cdot \bm{n} = 0 \quad & \text{on } \partial \Omega_{\epsilon} .
\end{cases}
\end{equation}  
In this configuration, at least one weak solution to \eqref{eq:microEvoNSE} exists \cite{feireisl2016homogenization}, which is obtained in \( \bm{u}_{\epsilon} \in L^{2} ( I; H^{1} ( \Omega_{\epsilon} )^{d} ) \) and \( p_{\epsilon} \in   H^{-1} ( I; L_{0}^{2} ( \Omega_{\epsilon} ) ) \), respectively \cite{mikelic1994mathematical}. 
To formulate the nonstationary version of Theorem~\ref{thmStatNSE}, the works of Feireisl \textit{et al.}~\cite{feireisl2016homogenization}, Allaire~\cite{allaire1992homogenization}, and Mikeli\'{c}~\cite{mikelic1991homogenization}
serve as a basis. 
Since only parts of the limit cases have been proven yet, we formulate a conjecture for the nonstationary case below.

\begin{definition}\label{def:timeDepUCPNSE} 
Let 
\begin{align}
\begin{cases}
\partial_{t} \bm{w}^{j} - \nu \bm{\Delta}_{\bm{x}} \bm{w}^{j} + \bm{\nabla}_{\bm{x}} \pi^{j} = \bm{0} \quad &\text{in } Y_{F} \times I , \\
\mathrm{div}_{\bm{x}} \bm{w}^{j} = 0 \quad &\text{in } Y_{F} \times I , \\
\bm{w}^{j}\vert_{t=0} = \bm{e}^{j} \quad &\text{in } Y_{F}, \\
\bm{w}^{j} = \bm{0} \quad &\text{on } (\partial Y_{S} \backslash \partial Y ) \times I 
\end{cases}
\end{align}
define a time-dependent unit cell problem \cite{mikelic1994mathematical}, where \(\bm{w}^{j}\) is \(H^{1}(Y)\)-periodic and \(\pi^{j}\) is \(L^{2}(Y)\)-periodic, component-wise. 
The matrix \(\tilde{\mathbf{A}} ( t )\) is then assembled through 
\begin{align}
\tilde{A}_{ij} \left( t \right)  = \frac{1}{\vert Y \vert} \int_{Y_{F}} w_{j}^{i} \left( \bm{y}, t \right) \,\mathrm{d} \bm{y}, 
\end{align}
for \(1\leq i,j \leq d\). 
\end{definition}

\begin{conjecture}\label{thmEvoNSE}
Let \(\left(\tilde{\bm{u}}_{\epsilon}, \tilde{p}_{\epsilon}\right)\) be a weak solution to \eqref{eq:microEvoNSE}. 
Assume that \(\lim_{\epsilon\searrow 0} \bm{u}_{0,\epsilon} = \bm{u}_{0} \) weakly in \(L^{2}(\Omega )^{d}\). 
According to the scaling regimes of the obstacle size, we distinguish between the following homogenization limits.
\begin{enumerate}
\item[(i)] If the obstacles are too small, i.e.\ \(\lim_{\epsilon\searrow 0} \sigma_{\epsilon} = + \infty\), then \(\left(\tilde{\bm{u}}_{\epsilon}, \tilde{p}_{\epsilon} \right)\) converges to \(\left( \bm{u}, p \right)\), a solution of the nonstationary nonlinear NSE
\begin{equation}\label{eq:conjectureNSE}
\begin{cases}
	\partial_{t} \bm{u} + \bm{u} \cdot \bm{\nabla}_{\bm{x}} \bm{u} - \nu \bm{\Delta}_{\bm{x}} \bm{u} = \bm{F} - \bm{\nabla}_{\bm{x}} p  \quad &\textit{in } \Omega \times I , \\
	\mathrm{div}_{\bm{x}} \bm{u} = 0 			 \quad &\textit{in } \Omega \times I, \\
    \bm{u}\vert_{t=0} = \bm{u}_{0} \quad & \textit{in } \Omega, \\
	\bm{u} = \bm{0} 						 \quad &\textit{on } \partial\Omega \times I  .
\end{cases} \quad 
\end{equation} 
\item[(ii)] If the obstacles have a critical size, i.e.\ \(\lim_{\epsilon\searrow 0} \sigma_{\epsilon} = \sigma > 0\), then \(\left( \tilde{\bm{u}}_{\epsilon}, \tilde{p}_{\epsilon}\right)\) converges in \(L^{2}(\Omega \times I  )\) and weakly in \(L^{2}(I; W_{0}^{1,2}(\Omega))\) to \(\left( \bm{u}, p \right)\), respectively, a solution of the nonstationary nonlinear BL
\begin{equation}\label{eq:conjectureBL}
\begin{cases}
	\partial_{t} \bm{u} +  \bm{u} \cdot \bm{\nabla}_{\bm{x}} \bm{u} - \nu \bm{\Delta}_{\bm{x}} \bm{u} + \frac{\nu}{\sigma^{2}} \mathbf{M} \bm{u} = \bm{F} - \bm{\nabla}_{\bm{x}} p  \quad &\textit{in } \Omega \times I  , \\
	\mathrm{div}_{\bm{x}} \bm{u} = 0 										   \quad &\textit{in } \Omega \times I , \\
     \bm{u}\vert_{t=0} = \bm{u}_{0} \quad & \textit{in } \Omega, \\
	\bm{u} = \bm{0} 													   \quad &\textit{on } \partial\Omega \times I  .
\end{cases} \quad 
\end{equation} 
\item[(iii)] If the obstacles are smaller than \(\mathcal{O}\left(\epsilon\right) \), but exceed the critical size, such that \(\lim_{\epsilon \searrow 0} \sigma_{\epsilon} = 0\), then a suitably rescaled version of \(\left( \tilde{\bm{u}}_{\epsilon}, \tilde{p}_{\epsilon}\right)\) converges to \(\left( \bm{u}, p \right)\), the unique solution of the time-dependent DL
\begin{equation} \label{eq:darcyLawTimeDep}
\begin{cases}
\partial_{t} \bm{u}  +	\nu \mathbf{M} \bm{u} =   \bm{F} - \bm{\nabla}_{\bm{x}} p   	 \quad &\textit{in } \Omega \times I, \\
	\mathrm{div}_{\bm{x}} \bm{u} = 0 										 \quad &\textit{in } \Omega  \times I , \\
     \bm{u}\vert_{t=0} = \bm{u}_{0} \quad & \textit{in } \Omega, \\
	\bm{u}\cdot \bm{n} = \bm{0} 													 \quad &\textit{on } \partial\Omega  \times I .
\end{cases} \quad 
\end{equation} 
\item[(iv)] 
If the obstacles are of size \(\mathcal{O} ( \epsilon )\), then the rescaled solution \( ( \epsilon^{2} \tilde{\bm{u}}_{\epsilon}, \partial_{t} \tilde{p}_{\epsilon} )\) converges in \( L^{2} ( I; \Omega )^{d} \) and weakly in \( H^{-1} ( I; L_{0}^{2} ( \Omega ) )\), respectively to \(\left( \bm{u}, p \right)\), the unique solution of the DL with memory
\begin{equation} \label{eq:darcyLawMemory}
\begin{cases}
\nu \bm{u} - \tilde{\mathbf{A}}\left(t\right) \bm{u}_{0} = \int_{0}^{t} \tilde{\mathbf{A}}\left(t-s\right) \left[\bm{F}\left(s\right) - \bm{\nabla}_{\bm{x}} p\left( s\right) \right] \,\mathrm{d}s  	 \quad &\textit{in } \Omega  \times I , \\
	\mathrm{div}_{\bm{x}} \bm{u} = 0 										 \quad &\textit{in } \Omega  \times I , \\
	\bm{u} \cdot \bm{n} = \bm{0} 													 \quad &\textit{on } \partial\Omega  \times I .
\end{cases} \quad 
\end{equation} 
Further, if the flow stabilizes after a finite period of time, the DL with memory \eqref{eq:darcyLawMemory} contracts for \(t\to \infty\) to the classical DL \eqref{eq:darcyLaw2} with 
\begin{equation}
A_{ij} = \int_{0}^{\infty} \tilde{A}_{ij} \left( t\right) \,\mathrm{d} t ,
\end{equation} 
for \(1\leq i,j, \leq d\). 
\end{enumerate}
In the regimes (i-iii), \(\mathbf{M}\) is the same \(d \times d\) symmetric matrix as in Proposition~\ref{prop:locModProbSNSE} and depends only on the model obstacle \(Y_{S}^{m}\).  
In case of (iv), \(\tilde{\mathbf{A}}\left( t \right) \) is constructed from Definition \ref{def:timeDepUCPNSE}. 
\end{conjecture}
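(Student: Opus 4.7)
The plan is to treat the four regimes separately, leveraging the frameworks of Allaire, Mikeli\'{c}, and Feireisl \textit{et al.}, while being honest about the fact that for the fully nonlinear nonstationary problem several pieces remain open, which is why the statement is posed as a conjecture rather than a theorem. I would first derive uniform-in-$\epsilon$ a priori energy estimates for $(\tilde{\bm{u}}_{\epsilon}, \tilde{p}_{\epsilon})$ by testing \eqref{eq:microEvoNSE} with $\bm{u}_{\epsilon}$. The prefactors $\epsilon^{4}$ and $\epsilon^{2}$ introduced in \eqref{eq:microEvoNSE} are chosen precisely so that the bound on $\|\tilde{\bm{u}}_{\epsilon}\|_{L^{2}(I;L^{2}(\Omega))}$ remains nontrivial as $\epsilon \searrow 0$. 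Combined with Sobolev--Poincar\'{e} inequalities adapted to the perforated domain $\Omega_{\epsilon}$, this yields weak compactness of the extended fields and also a weak limit for an appropriately extended pressure in the spirit of \eqref{eq:microNSE_completion}.

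The second step is to construct a family of oscillating test functions via a restriction operator $R_{\epsilon}\colon H_{0}^{1}(\Omega)^{d} \to H_{0}^{1}(\Omega_{\epsilon})^{d}$ preserving the divergence-free constraint, and to pass to the limit in the variational formulation. For regime (i) the porosity corrections introduced by $R_{\epsilon}$ vanish in the limit and one recovers \eqref{eq:conjectureNSE} by a direct adaptation of the stationary argument in Theorem~\ref{thmStatNSE}. For regime (ii) the critical-size construction of \cite{allaire1991homogenizationNSE} produces the Brinkman term $\frac{\nu}{\sigma^{2}}\mathbf{M}\bm{u}$, and the nonlinear convective term is handled by combining this with the compactness machinery of \cite{feireisl2016homogenization}. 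Regime (iii) follows by rescaling $\tilde{\bm{u}}_{\epsilon}/\sigma_{\epsilon}^{2}$ as in Theorem~\ref{thmStatNSE}~(iii), adapting the time-dependent cell problem to extract the Darcy term $\nu\mathbf{M}\bm{u}$ together with the retained time derivative $\partial_{t}\bm{u}$. Regime (iv) is essentially already in the literature: Mikeli\'{c}~\cite{mikelic1991homogenization,mikelic1994mathematical} and Feireisl \textit{et al.}~\cite{feireisl2016homogenization} derive the memory kernel via the auxiliary problem of Definition~\ref{def:timeDepUCPNSE}, and the asymptotic reduction to the classical DL~\eqref{eq:darcyLaw2} for $t\to\infty$ follows from a Tauberian argument on the Laplace transform of $\tilde{\mathbf{A}}(t)$.

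The main obstacle is passing to the limit in the convective term $\epsilon^{4}\bm{u}_{\epsilon}\cdot\bm{\nabla}_{\bm{x}}\bm{u}_{\epsilon}$ in the nonstationary nonlinear regimes (i) and (ii). Weak convergence of $\tilde{\bm{u}}_{\epsilon}$ does not suffice; one needs strong $L^{2}$ convergence in space-time, which is typically obtained via the Aubin--Lions lemma together with fractional-in-time estimates on $\partial_{t}\tilde{\bm{u}}_{\epsilon}$. Such estimates are delicate because the pressure extension \eqref{eq:microNSE_completion} does not \textit{a priori} inherit sufficient temporal regularity, and because the obstacle boundaries interfere with standard commutator estimates. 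This compactness issue is what prevents a direct synthesis of the existing linear (Stokes) results of \cite{allaire1992homogenization,mikelic1994mathematical} with the stationary nonlinear result \cite{allaire1991homogenizationNSE}. Consequently, I would present the four items as a structured conjecture, rigorously proving regime (iv) from \cite{mikelic1994mathematical,feireisl2016homogenization} and regime (ii) from \cite{feireisl2016homogenization}, while stating regimes (i) and (iii) as natural extensions whose full justification requires closing the nonlinear compactness argument—an open problem that is acknowledged explicitly in the surrounding discussion.
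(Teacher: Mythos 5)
Your proposal is correct and follows essentially the same route as the paper: the statement is established only for cases (ii) and (iv) by recalling the results of Feireisl \emph{et al.} and Mikeli\'{c} (Theorem~1.2 therein with \(\beta=4\)), respectively, while cases (i) and (iii) are left open precisely because the nonlinear compactness argument you identify has not been closed for the nonstationary NSE. Your additional sketch of the energy estimates, restriction operators, and Aubin--Lions obstruction is a faithful elaboration of what the paper's accompanying remark suggests (treating the advective term as a compact perturbation of the nonstationary Stokes results of \cite{allaire1992homogenization}), so no substantive discrepancy remains.
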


\begin{proof}[Proof of cases (ii) and (iv)]
 In contrast to the stationary case (see Theorem~\ref{thmStatNSE}, Theorem~\ref{thmStatNSECase4} and Theorem~\ref{thm:lowVolumeFrac}) only parts of the homogenization limits in Conjecture \ref{thmEvoNSE} have been proven yet. 
 In addition, to the knowledge of the authors, none of the interconnections between individual cases (i--iv) have been established yet. 
Hence we recall the available proofs only. 
Feireisl \textit{et al.}~\cite{feireisl2016homogenization} proved case (ii), where obstacles of critical size smaller than \(\mathcal{O}\left( \epsilon\right)\) are considered. 
However a differing methodology to the one used by Allaire is applied to rigorously pass to the limit equations. 
Via the techniques used in \cite{feireisl2016homogenization}, the above assumptions on the shape and location of the obstacles can be loosened. 
The resulting homogenized equations however, are a BL which is similar to the one obtained in the framework introduced above. 
The case (iv) above is a special case of the derivations in Mikeli\'{c}~\cite{mikelic1994mathematical} (see Theorem~1.2 therein, with \(\beta = 4\)) and is thus rigorously proven. 
\end{proof}

\begin{remark}
Cases (i) and (iii) are based on the conclusive evidence in the literature (see e.g.\ \cite{allaire1991homogenization,feireisl2016homogenization}) for the resulting PDEs when homogenizing the nonlinear nonstationary NSE~\eqref{eq:microEvoNSE}. 
For a rigorous proof, starting from the homogenization limits of the nonstationary Stokes equations established in \cite{allaire1992homogenization} could be promising, since, as stated in \cite{allaire1991homogenization}, the inclusion of a nonlinear advective term to the Stokes equations resembles a compact perturbation of the \(\epsilon\)-dependent stationary nonlinear NSE~\eqref{eq:microNSE}. 
It is also notable that for case (iii), a proof for the homogenization of the nonstationary Stokes equations (without the nonlinear advective term) is given in \cite{allaire1992homogenization}. 
Concerning the low volume fraction limit which connects cases (iv) to (iii), the memory effective terms of the DL with memory \eqref{eq:darcyLawMemory} might induce the time-dependency in the time-dependent DL \eqref{eq:darcyLawTimeDep}. 
Hence, an import of stationary effects and an additional solving for time-dependent eigenvalue problems in respective cell spaces \cite{allaire1991continuity} might be insightful. 
\end{remark}

\subsection{Applicability of the homogenized model}\label{subsec:applicability}
\begin{assumption}\label{ass:thmAssump}
To establish a connection to experimentally conforming model equations, we make the following assumptions:
\begin{enumerate} 
			\item 
			According to \cite{bear1972dynamics,mikelic1994mathematical}, the stabilization of the DL with memory \eqref{eq:darcyLawMemory} toward the classical DL \eqref{eq:darcyLaw2} is understood to happen in a short period of time.
				  Hence, we assume a stabilized flow in case of obstacle sizes which obey Conjecture~\ref{thmEvoNSE} case (iv), i.e.\ the homogenization limit is constituted by an ordinary DL. 
				  Similarly, we assume stabilization for case (iii). 
			      Typically this involves adding Brinkman terms (diffusion) or other necessary features to the DL in case (iv). 
			      Though these artificial features are effective in the void and within the porous--void interface, they are contracted to zero within the porous media under the necessary local assumptions of highly viscous and stabilized (stationary) flow. 
			\item The porosity is determined to be constant in \(\Omega_{T}\). 
			\item The medium is isotropic, which results in regular symmetric, hence diagonal or diagonalizable matrices \(\mathbf{M}\) and \(\mathbf{A}\). 
				  Further, we may thus reduce the matrix \(\mathbf{A}\) or \(\mathbf{M}^{-1}\) to its only eigenvalue, which yields a scalar multiplication. 
				  Below we assume this simplification and unless stated otherwise, denote the single eigenvalue of \(\mathbf{A}\) with \(A\). 
		\end{enumerate}
\end{assumption}
\begin{remark}
Assumption~\ref{ass:thmAssump} supports the commonly formulated Brinkman equation \cite{hornung1997homogenization,nield2017convection}, which is constituted by a classical DL plus a diffusion term. 
Neglecting the time-dependency in the BL derived above as well as its inertial terms, results in a simplified equation which solely respects diffusion. 
To match the porous\textendash void interface, Spaid and Phelan~\cite{spaid1997lattice} used such a Brinkman equation as a stationary limit for their simulations. 
A note in \cite{spaid1997lattice} additionally states that far from the interface, and within the porous domain region, the governing equation reduces again, to the classical DL \cite{spaid1997lattice}. 
It should however be noted that along the stationary limit, the nonstationary solution to the method in \cite{spaid1997lattice} is rather a BL as presently formulated, which was not further examined therein. 
\end{remark}

For \(d=3\) (see Figure~\ref{fig:unitCell3d}), \(\sigma_{\epsilon}\) describes the square root of the ratio of the cell volume to the obstacle diameter 
\begin{align}\label{eq:sigma}
\sigma_{\epsilon} = \left( \frac{\epsilon^{3}}{a_{\epsilon}} \right)^{\frac{1}{2}}. 
\end{align}
We use the classical notion of porosity \cite{hornung1997homogenization} to assess the above framework in terms of applicability. 

\begin{proposition}\label{prop:porosity}
For Conjecture~\ref{thmEvoNSE}(iv) we obtain the minimal porosity of \(\varphi \approx 0.4764\). 
\end{proposition}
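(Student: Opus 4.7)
The plan is a direct volumetric calculation, based on interpreting $\varphi$ as the fluid volume fraction of a single periodic cell. By the construction in Section~\ref{subsec:geometric}, each cell $Y_i^{\epsilon}$ is a cube of side length $\epsilon$ and the solid inclusion $Y_{S,i}^{\epsilon}$ is a ball of diameter $a_{\epsilon}$. By periodicity of the obstacle arrangement, the overall porosity coincides with the fluid fraction of any single cell, so I will first write
\begin{equation*}
\varphi \;=\; 1 - \frac{|Y_{S,i}^{\epsilon}|}{|Y_{i}^{\epsilon}|} \;=\; 1 - \frac{\pi a_{\epsilon}^{3}}{6\,\epsilon^{3}}.
\end{equation*}

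Next I will specialize to the scaling regime of Conjecture~\ref{thmEvoNSE}(iv), namely $a_{\epsilon} = \delta\epsilon$ for some $\delta>0$. In terms of this dimensionless ratio the porosity simplifies to $\varphi(\delta) = 1 - \pi\delta^{3}/6$, which is strictly monotonically decreasing in $\delta$. Consequently the minimal porosity is attained at the maximal admissible value of $\delta$, and the problem is reduced to identifying that maximum.

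The remaining—and only substantive—step is the geometric constraint. Since the spherical obstacles associated with distinct cells must be mutually disjoint (equivalently, each sphere must be inscribed in its cubic cell of side $\epsilon$), one necessarily has $a_{\epsilon} \le \epsilon$, i.e.\ $\delta \le 1$. This bound is saturated by the inscribed sphere, and $\delta=1$ remains compatible with the scaling $a_{\epsilon} = \mathcal{O}(\epsilon)$ of case (iv). Substituting $\delta = 1$ then yields
\begin{equation*}
\varphi_{\min} \;=\; 1 - \frac{\pi}{6} \;\approx\; 0.4764,
\end{equation*}
as claimed. The main obstacle, such as it is, lies not in the arithmetic but in justifying that $\delta=1$ is indeed the correct geometric upper bound and that it lies within the regime covered by case (iv); both points follow directly from the definitions in Section~\ref{subsec:geometric} and the statement of Conjecture~\ref{thmEvoNSE}.
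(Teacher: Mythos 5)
Your proposal is correct and follows essentially the same route as the paper: both compute the porosity as the fluid volume fraction of one cell, $\varphi = 1 - \pi a_{\epsilon}^{3}/(6\epsilon^{3})$, specialize to the case-(iv) scaling $a_{\epsilon}=\mathcal{O}(\epsilon)$, and evaluate at the maximal prefactor to get $1-\pi/6$. If anything, your explicit monotonicity argument and the inscribed-sphere bound $\delta\le 1$ justify the choice of prefactor more carefully than the paper, which simply sets its constant $C=1$ without comment.
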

\begin{proof}
We limit our analysis to polynomial ansatz up to degree 4. 
Recalling Conjecture~\ref{thmEvoNSE}, the size \(a_{\epsilon}\) of the obstacles for \(d=3\) can be distinguished in the order of \(\epsilon\) as follows. 
Let \(0<C<\epsilon\) be a constant prefactor. 
\begin{itemize}
\item[(i)] Let \(a_{\epsilon} = C \epsilon^{4} = \mathcal{O}\left( \epsilon^{4}\right)\). Then \(\sigma_{\epsilon} = \left( \frac{1}{C \epsilon}\right) ^{\frac{1}{2}} \Rightarrow \lim\limits_{\epsilon \searrow 0} \sigma_{\epsilon} = + \infty\).
\item[(ii)] Let \(a_{\epsilon} = C \epsilon^{3} = \mathcal{O} \left( \epsilon^{3}\right)\). Then \(\sigma_{\epsilon} = \left( \frac{1}{C} \right) ^{\frac{1}{2}} \Rightarrow \lim\limits_{\epsilon \searrow 0} \sigma_{\epsilon} = \sigma > 0\).
\item[(iii)] Let \(a_{\epsilon} = C \epsilon^{2} = \mathcal{O} \left( \epsilon^{2} \right)\). Then \(\sigma_{\epsilon} = \left( \frac{\epsilon}{C} \right) ^{\frac{1}{2}} \Rightarrow \lim\limits_{\epsilon \searrow 0} \sigma_{\epsilon} = 0\). 
\item[(iv)]  Let \(a_{\epsilon} = C \epsilon^{1} = \mathcal{O} \left( \epsilon^{1}\right)\). Then \(\sigma_{\epsilon} = \left( \frac{\epsilon^{2}}{C} \right)^{\frac{1}{2}} \Rightarrow \lim\limits_{\epsilon \searrow 0} \sigma_{\epsilon} = 0\). 
\end{itemize}
For the purpose of illustration, the limits of \(\sigma_{\epsilon}\) for the cases (i--iv) are plotted in Figure~\ref{fig:sigmaEps} with a fixed constant \(C=1\). 
\begin{figure}[ht!]
\centering
	\includegraphics[scale=1]{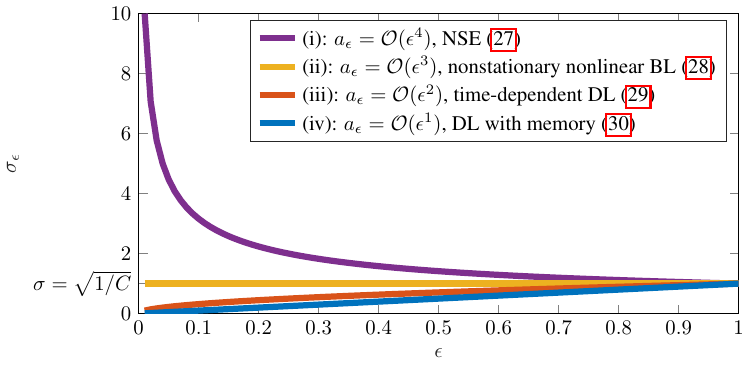}
\caption{Graph of ratio \(\sigma_{\epsilon} ( \epsilon )\) \eqref{eq:sigma} for \(d=3\) and \(C=1\).}
\label{fig:sigmaEps}
\end{figure}
Subsequent to forming the porosity parameter \(\varphi\) as the ratio of void and full domain, the injection of the magnitude approximation for \(a_{\epsilon}\) yields 
\begin{align}\label{eq:porosityCalc}
\varphi & = 
\frac{\left\vert \Omega_{\epsilon} \right\vert}{\left\vert \Omega\right\vert } 
= \frac{\left\vert \Omega - \bigcup\limits_{i=1}^{N\left(\epsilon\right)} Y_{S,{i}}^{\epsilon}\right\vert}{\left\vert \Omega\right\vert} 
= 1 - \frac{\left\vert \bigcup\limits_{i=1}^{N\left(\epsilon\right)} Y_{S,{i}}^{\epsilon}\right\vert}{\left\vert \Omega\right\vert} 
= 1 - \frac{\pi a_{\epsilon}^{3}}{6 \epsilon^{3}} = 
\begin{cases}
1 - \frac{C \pi}{6} \epsilon^{9}  \quad & \text{in case (i)},\\
1 - \frac{C \pi}{6} \epsilon^{6}  \quad & \text{in case (ii)},\\
1 - \frac{C \pi}{6} \epsilon^{3}  \quad & \text{in case (iii)}, \\
1 - \frac{C \pi}{6}  \quad & \text{in case (iv)} \\
\end{cases} \\
& \xrightarrow[\varepsilon \searrow 0]{} 
\begin{cases}
1  \quad & \text{in cases (i-iii)}, \\
1  - \frac{C \pi}{6} \quad & \text{in case (iv)}, \label{eq:porosity}
\end{cases} 
\end{align} 
where \(\left\vert \;\cdot\; \right\vert\) denotes the Lebesgue measure of the standard Euclidean space. 
The claim follows from setting \(C=1\) in \eqref{eq:porosity}. 
\end{proof}
\begin{remark}
Proposition~\ref{prop:porosity} frames the modeling possibilities of the presented approach, since the minimal attainable porosity is similar to a square sphere packing \cite{ghezzehei2003pore,nield2017convection}. 
The formal computations above thus imply physical reasoning for the theoretical homogenization limit equations in Conjecture~\ref{thmEvoNSE}.
\end{remark}
\begin{remark}
A lower porosity could be obtained e.g.\ by considering flow through two-dimensional porous media in three dimensions, or by choosing three-dimensional obstacles in different arrangements \cite{dalwadi2015understanding,nield2017convection}. 
Whereas the former becomes reasonable when modeling for example fibers as obstacles with a circular cross-section \cite{bang1999application} and repeating above calculations for \(d=2\), the latter renders rather complicated, due to the necessity of proving Conjecture \ref{thmEvoNSE} under loosened initial topological assumptions on the obstacles \cite{feireisl2016homogenization}. 
The question if all four cases would be retained under a differentiability-breaking change of shape or cell-crossing shifts in location, remains to be answered. 
\end{remark}

\begin{remark}\label{rem:unifiedBL}
Under Assumption~\ref{ass:thmAssump} we formulate a unified BL for case Conjecture~\ref{thmEvoNSE}(ii) below (Definition~\ref{def:homogenizedNSEtarget}), which depends on \(\sigma\) and formally limits 
\begin{itemize}
\item either to the nonstationary nonlinear NSE in case (i) for \( \sigma \to \infty\) 
\item or (via rescaling the solution to \(\tilde{\bm{u}}/\sigma_{\epsilon}^{2}\)) to the stabilized DL in case (iii) and (iv) for \(\sigma \searrow 0\).
\end{itemize}
Further, since the continuity in the low volume fraction limit \cite{allaire1991continuity} implies that \(\mathbf{M}^{-1}\) is the limit of \(\mathbf{A}\), we use \(\mathbf{A}\) in the modified BL and, due to Assumption~\ref{ass:thmAssump}(3.), reduce it to its single eigenvalue \(A\). 
In summary, the resulting model equation is assumed to be valid for all herein considered porosities and permeabilities. 
We additionally motivate the procedure of emulating all four regimes by recent observations that turbulence prevails for porosity values approaching unity in aligned arrays of spheres \cite{rao2022possibility}. 
\end{remark}
\begin{definition}\label{def:homogenizedNSEtarget}
Based on Conjecture~\ref{thmEvoNSE}, Assumption~\ref{ass:thmAssump}, Proposition~\ref{prop:porosity} and Remark~\ref{rem:unifiedBL}, we construct a unified nonstationary nonlinear BL
\begin{align}\label{eq:targetEBTL}
\begin{cases}
	\partial_{t} \bm{u} +  \bm{u} \cdot \bm{\nabla}_{\bm{x}} \bm{u} - \nu \bm{\Delta}_{\bm{x}} \bm{u} + \frac{\nu}{\sigma^{2}} A^{-1} \bm{u} = \bm{F} - \bm{\nabla}_{\bm{x}} p  \quad &\text{in } \Omega  \times I, \\
	\mathrm{div}_{\bm{x}} \bm{u} = 0 										   \quad &\text{in } \Omega \times I , \\
    \bm{u}\vert_{t=0} = \bm{u}_{0} \quad &\text{in } \Omega, \\
	\bm{u} = \bm{0} 													   \quad &\text{on } \partial\Omega  \times I, 
\end{cases} 
\end{align} 
which is used as a target PDE system for the kinetic model derivation below in Section~\ref{sec:kinMod}, and is to be approximated with LBMs in the sequel \cite{simonis2023hlbmPartII}. 
Due to the unified perspective, the PDE system~\eqref{eq:targetEBTL} is now referred to as homogenized NSE (HNSE). 
\end{definition}

\section{Kinetic Model Derivation}\label{sec:kinMod}

The overall goal of this series of works is to derive consistently an LBE for approximating the HNSE, which then forms the centerpiece of the final LBM algorithm. 
Conforming to the discretization approach of LBM, we couple one or more scaling parameters (e.g.\ \(\varepsilon>0\)) of a Boltzmann-like equation to an artificially injected grid parameter $h\in\Rzsp$. 
Toward this aim, in the present work we construct this kinetic equation in undiscretized form which approximates the HNSE in a diffusive limit. 
The aim of the sequel (part II \cite{simonis2023hlbmPartII}) is then, to discretize the kinetic model while retaining the kinetic limit (see Figure~\ref{fig:limitCons}). 
The discretization is thus required to be limit consistent in the sense of \cite{simonis2022limit,simonis2023pde}. 
Since we aim for formal convergence of the final LBM only, the notion of limit consistency requires formal convergence of the kinetic model as well which is proven below. 

\subsection{Preliminaries}
Let $\Omega \subseteq \mathbb{R}^d$ with \(d=3\) be a volume of rarefied gas which comprises many interacting particles. 
Via equalizing the mass $m \in \mathbb{R}_{>0}$, we interpret the particles as point masses. 
The state of a one-particle system is assumed to depend on position $\bm{x} \in \Omega$ and velocity $\bm{v} \in \Xi$ at time $t \in I=[t_0,t_1] \subseteq \mathbb{R}$ with \(T\geq t_{1}>t_{0}>0\), where $\Omega \subseteq \mathbb{R}^d$ denotes the positional space, $\Xi = \mathbb{R}^d$ is the velocity space, \(\mathfrak{P} \coloneqq \Omega \times \Xi\) is the phase space, and the crossing \(\mathfrak{R} \coloneqq \Omega \times \Xi \times I \) defines the phase-time tuple. 
\begin{definition}
The probability density function 
\begin{align} \label{eq:statisticPDF}
 f\colon\; \mathfrak{R} \to \mathbb{R}_{>0} ,\, (\bm{x},\bm{v},t) \mapsto f(\bm{x} , \bm{v}, t ) 
\end{align}
for the particles' positions \(\bm{x}\in \Omega\) and velocities \(\bm{v}\in \Xi\) at time \(t\in I\) defines the state of the dynamical system which is governed by the Boltzmann equation (BE)
\begin{align} \label{eq:continuousBE}
  \left( \partial_{t} + \bm{v} \cdot \bm{\nabla}_{\bm{x}} + \frac{\bm{F}}{m} \cdot \bm{\nabla}_{\bm{v}} \right) f = J(f,f ) \quad \text{in } \mathfrak{R}, 
\end{align}
where 
\begin{align}
f \vert_{t=0} = f_{0} \quad \text{in } \mathfrak{P}
\end{align}
supplements a suitable initial condition. 
The operator 
\begin{align}
J\left( f, f\right) =  \int_{\mathbb{R}^{3}} \int_{S^{2}} 
\vert \bm{v} - \bm{w} \vert 
\left[ 
f \left( \bm{x}, \bm{v}^{\prime}, t \right) 
f \left( \bm{x}, \bm{w}^{\prime}, t \right) 
- 
f \left( \bm{x}, \bm{v}, t \right) 
f \left( \bm{x}, \bm{w}, t \right) 
\right]
\,\mathrm{d}\bm{N} 
\,\mathrm{d}\bm{w}
\end{align}
models the collision, where \(\mathrm{d}\bm{N}\) is the normalized surface integral with the unit vector \(\bm{N}\in S^{2}\) and \(\left( \bm{v}^{\prime}, \bm{w}^{\prime} \right)^{\mathrm{T}} = T_{\bm{N}}\left( \bm{v}, \bm{w} \right)^{\mathrm{T}}\) result from the transformation \(T_{\bm{N}}\) that models hard sphere collision \cite{babovsky1998boltzmann}. 
\end{definition}
\begin{definition}\label{def:moments}
Let \(f\) be given in the sense of \eqref{eq:statisticPDF}. 
Then, via prefactored integration over \(\Xi = \mathbb{R}^{d}\), we define the moments 
\begin{align}
n_f &\colon\;\begin{cases}
\Omega \times I \to\mathbb{R}_{>0},  \\
(\bm{x},t)\mapsto n_f(\bm{x},t) \coloneqq \int_{\mathbb{R}^d} f(\bm{x},\bm{v},t) \,\mathrm{d}\bm{v}, 
\end{cases} \label{eq:statisticPDFparticleDensity} \\
\rho_f &\colon\;\begin{cases} 
\Omega \times I \to\mathbb{R}_{>0}, \\
(\bm{x},t)\mapsto \rho_f(\bm{x},t) \coloneqq mn_f(\bm{x},t), 
\end{cases} \label{eq:statisticPDFmassDensity} \\
\bm{u}_f &\colon\;\begin{cases} 
\Omega \times I \to\mathbb{R}^{d}, \\
(\bm{x},t)\mapsto u_f(\bm{x},t) \coloneqq \frac{1}{n_f(\bm{x},t)} \int_{\mathbb{R}^d} \bm{v} f(\bm{x},\bm{v},t) \,\mathrm{d}\bm{v}, 
\end{cases} \label{eq:statisticPDFvelocity} \\
\mathbf{P}_f &\colon\;\begin{cases} 
\Omega \times I \to\mathbb{R}^{d\times d},  \\
(\bm{x},t) \mapsto \mathbf{P}_f(\bm{x},t) \coloneqq m\int_{\mathbb{R}^d} \left[\bm{v} - \bm{u}_f(\bm{x},t)\right] \otimes \left[\bm{v} - \bm{u}_f(\bm{x},t)\right] f(\bm{x},\bm{v},t)\,\mathrm{d}\bm{v}, 
\end{cases} \label{eq:statisticPDFstress} \\
p_f &\colon\; \begin{cases} 
\Omega \times I  \to \mathbb{R}_{>0}, \\
(\bm{x},t) \mapsto p_f(\bm{x},t) \coloneqq \frac{1}{d} \sum\limits^d_{i=1} \left(\mathbf{P}_f\right)_{i,i}(\bm{x},t), 
\end{cases}
\label{eq:statisticPDFpressure} 
\end{align}
respectively as particle density, mass density, velocity, stress tensor, and pressure. 
Here and below, the moments of $f$ are indexed with \(\cdot_{f}\). 
\end{definition}
Notably, the absolute temperature $\theta$ is determined implicitly by an ideal gas assumption \begin{align}\label{eq:idealGasLaw}
p_f=n_f R\theta,
\end{align}
where $R>0$ is the universal gas constant. 
To a dedicated order of magnitude in characteristic scales, the above moments approximate the macroscopic quantities conserved by the incompressible NSE \cite{gorban2018hilbert}. 
Equilibrium states $f^{\mathrm{eq}}$, defined by  
\begin{align} 
  J(f^{\mathrm{eq}}, f^{\mathrm{eq}}) =0 \quad \text{in } \mathfrak{R} ,
\end{align} 
exist \cite{gorban2018hilbert}. 
Via the gas constant \(R = k_{\mathrm{B}}/m \in\mathbb{R}_{>0}\) (where \(k_{\mathrm{B}}\in\mathbb{R}_{>0}\) is the Boltzmann constant) and $\theta\in\mathbb{R}_{>0}$, $n_f$ as well as $\bm{u}_f$, the equilibrium state is found to be of Maxwellian form
\begin{align} \label{eq:feq}
	f^{\mathrm{eq}}(\bm{x},\bm{v},t) \colon\; 
	\begin{cases}
	\mathfrak{R} \to \mathbb{R}, \\
	(\bm{x},\bm{v},t) \mapsto \frac{n_f(\bm{x},t)}{\left(2 \pi R \theta \right)^{\frac{d}{2}}} \exp\left(-\frac{\left[ \bm{v} - \bm{u}_f(\bm{x},t) \right]^2}{2 R \theta}\right) .
	\end{cases}
\end{align}
\begin{remark}
We identify $f^{\mathrm{eq}}/n_f$ as $d$-dimensional normal distribution for $\bm{v}\in\mathbb{R}^d$ with expectation $\bm{u}_f$ and covariance $R\theta \mathbf{I}_d$. 
In this regard, the arguments of \(f^{\mathrm{eq}}\) regularly appear in terms of moments \(f^{\mathrm{eq}} ( n_{f}, \bm{u}_{f}, \theta)\) (see e.g.\ \cite{krause2010fluid,he1997theory,junk2005asymptotic,lallemand2000theory}). 
\end{remark}
\begin{lemma}
The moments \(\rho_{f}\), \(\bm{u}_{f}\) and \(p_{f}\) are conserved by collision. 
\end{lemma}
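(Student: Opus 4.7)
The plan is to invoke the classical weak-form symmetrization of the hard-sphere collision operator and apply it to the elementary collisional invariants. Concretely, for any sufficiently regular test function \(\phi\colon \mathbb{R}^{3} \to \mathbb{R}\), I would first establish the identity
\begin{align*}
\int_{\mathbb{R}^{3}} \phi(\bm{v}) J(f,f)(\bm{x},\bm{v},t)\,\mathrm{d}\bm{v}
 = \frac{1}{4} \int_{\mathbb{R}^{3}}\!\int_{\mathbb{R}^{3}}\!\int_{S^{2}} \vert \bm{v} - \bm{w} \vert \bigl[ f(\bm{v}') f(\bm{w}') - f(\bm{v}) f(\bm{w}) \bigr] \bigl[ \phi(\bm{v}) + \phi(\bm{w}) - \phi(\bm{v}') - \phi(\bm{w}') \bigr]\,\mathrm{d}\bm{N}\,\mathrm{d}\bm{w}\,\mathrm{d}\bm{v},
\end{align*}
where the \(\bm{x}\) and \(t\) dependences of \(f\) are suppressed. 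This is obtained by first swapping the dummy variables \(\bm{v}\leftrightarrow\bm{w}\), then applying the involutive change of variables \((\bm{v},\bm{w})\mapsto(\bm{v}',\bm{w}') = T_{\bm{N}}(\bm{v},\bm{w})\), which has unit Jacobian and preserves \(\vert\bm{v}-\bm{w}\vert\), and finally averaging the four resulting expressions.

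Second, I would recall that the transformation \(T_{\bm{N}}\) encoding elastic hard-sphere collision preserves mass count, total momentum, and total kinetic energy, so that the bracket \(\phi(\bm{v})+\phi(\bm{w})-\phi(\bm{v}')-\phi(\bm{w}')\) vanishes identically for \(\phi \in \{1,\, v_{k},\, \vert\bm{v}\vert^{2}\}\) with \(1\leq k \leq d\). Inserting each of these into the identity above yields
\begin{align*}
\int_{\mathbb{R}^{3}} J(f,f)\,\mathrm{d}\bm{v} = 0, \qquad \int_{\mathbb{R}^{3}} \bm{v}\,J(f,f)\,\mathrm{d}\bm{v} = \bm{0}, \qquad \int_{\mathbb{R}^{3}} \vert \bm{v}\vert^{2}\,J(f,f)\,\mathrm{d}\bm{v} = 0.
\end{align*}

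Third, I would translate these three orthogonality relations into conservation statements for the moments of Definition~\ref{def:moments} at fixed \((\bm{x},t)\), interpreting \enquote{conserved by collision} as the vanishing of the collisional contribution \(\int \phi(\bm{v}) J(f,f)\,\mathrm{d}\bm{v}\) to \(\partial_{t}\) of the respective moment. The first identity gives conservation of \(n_{f}\), and hence of \(\rho_{f}=mn_{f}\). The second, combined with the first, gives conservation of \(n_{f}\bm{u}_{f}\) and therefore of \(\bm{u}_{f}\). For the pressure, I would use the algebraic rearrangement
\begin{align*}
p_{f} = \frac{1}{d}\operatorname{tr}(\mathbf{P}_{f}) = \frac{m}{d}\int_{\mathbb{R}^{3}} \vert \bm{v} - \bm{u}_{f} \vert^{2} f\,\mathrm{d}\bm{v} = \frac{m}{d}\int_{\mathbb{R}^{3}} \vert \bm{v}\vert^{2} f\,\mathrm{d}\bm{v} - \frac{\rho_{f}}{d}\vert \bm{u}_{f}\vert^{2},
\end{align*}
so that conservation of \(p_{f}\) follows from the third orthogonality relation together with the already established conservation of \(\rho_{f}\) and \(\bm{u}_{f}\).

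The main obstacle I anticipate is not the algebra but the careful bookkeeping in the symmetrization step: verifying that the transformation \(T_{\bm{N}}\) has unit Jacobian and preserves \(\vert\bm{v}-\bm{w}\vert\,\mathrm{d}\bm{v}\,\mathrm{d}\bm{w}\,\mathrm{d}\bm{N}\), and that the relevant Fubini swaps are admissible under the assumed regularity on \(f\). Since the paper's convergence statements are formal, I would state these integrability requirements as standing assumptions rather than prove them, and otherwise carry out the argument exactly as above.
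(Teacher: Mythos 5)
Your proof is correct but follows a genuinely different route from the paper's. You prove the statement for the full hard-sphere operator $J$ via the classical weak-form symmetrization and the microscopic invariants $1$, $v_k$, $\vert\bm{v}\vert^{2}$ of the collision map $T_{\bm{N}}$, concluding that the collisional contributions to $\rho_f$, $\rho_f\bm{u}_f$ and (after subtracting the bulk kinetic energy) $p_f$ vanish. The paper instead reads ``conserved by collision'' through the equilibrium: it computes directly that the Maxwellian $f^{\mathrm{eq}}(n_f,\bm{u}_f,\theta)$, being $n_f$ times a normal density with mean $\bm{u}_f$ and covariance $R\theta\,\mathbf{I}_d$, satisfies $\rho_{f^{\mathrm{eq}}}=\rho_f$, $\bm{u}_{f^{\mathrm{eq}}}=\bm{u}_f$ and, via the ideal gas law, $p_{f^{\mathrm{eq}}}=p_f$. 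Your argument is the more standard and more general one for $J$ itself and does not depend on the explicit Gaussian form of the equilibrium; the paper's computation is the one that actually gets reused downstream: the BGK operator $Q(f)=-\tau^{-1}(f-M_f^{\mathrm{eq}})$ introduced immediately afterwards conserves a moment precisely when $M_f^{\mathrm{eq}}$ reproduces it, and the identical moment calculation is then repeated for the homogenized Maxwellian $f^{\mathrm{eq}}(n_f,\varpi\bm{u}_f,T)$, where the first moment deliberately becomes $\varpi\bm{u}_f$ so that momentum is \emph{not} conserved. Your symmetrization argument says nothing about that modified setting, so while it fully establishes the lemma as stated, it would not substitute for the equilibrium-moment computation that the rest of Section~3 builds on.
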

\begin{proof}
From $f^{\mathrm{eq}}/n_f$ being a density function, we find
\begin{align} \label{eq:boltzStatisticEquilibrium7}
  \rho_{f^{\mathrm{eq}}} &\stackrel{\eqref{eq:statisticPDFmassDensity}}{=} m \int_{\mathbb{R}^d} f^{\mathrm{eq}} (\bm{x},\bm{v},t)\,\mathrm{d}\bm{v}~= mn_f =\rho_f , \\
\label{eq:boltzStatisticEquilibrium8}
  \bm{u}_{f^{\mathrm{eq}}} &\stackrel{\eqref{eq:statisticPDFvelocity}}{=} \frac{1}{n_{f^{\mathrm{eq}}}} \int_{\mathbb{R}^d} \bm{v} f^{\mathrm{eq}} (\bm{x},\bm{v},t)\,\mathrm{d}\bm{v}~= \bm{u}_f .
\end{align}
The covariance matrix of $f^{\mathrm{eq}}/n_f$ for a perfect gas \eqref{eq:idealGasLaw}, verifies the conservation of pressure 
\begin{align} \label{eq:boltzStatisticEquilibrium10}
  p_{f^{\mathrm{eq}}} & \stackrel{\eqref{eq:statisticPDFpressure}}{=}  \frac{1}{d}m \int_{\mathbb{R}^d} \left(\bm{v} - \bm{u}_{f^{\mathrm{eq}}}\right)^{2} f^{\mathrm{eq}} (\bm{x},\bm{v},t)\,\mathrm{d} \bm{v}  \nonumber \\
 & =  \frac{1}{d}m \int_{\mathbb{R}^d} \left(\bm{v} - \bm{u}_{f}\right)^{2} f^{\mathrm{eq}} (\bm{x},\bm{v},t)\,\mathrm{d}\bm{v}  \nonumber \\
 & =  \frac{1}{d} m n_f \sum_{i=1}^{d} R\theta \nonumber\\
 & = p_f. 
\end{align}
\end{proof}
\begin{definition}
According to the Bhatnagar--Gross--Krook (BGK) model \cite{bhatnagar1954model}, we simplify the collision operator $J$ in \eqref{eq:continuousBE} to  
\begin{align} \label{eq:collisionOperatorQ}
  Q(f) \coloneqq -\frac{1}{\tau}(f-M_{f}^{\mathrm{eq}}) \quad & \text{in } \mathfrak{R} ,  
\end{align}
where $\tau > 0$ denotes the relaxation time between collisions, and \(M^{\mathrm{eq}}_{f} = f^{\mathrm{eq}}(\bm{x},\bm{v},t)\) is a formal particular Maxwellian determined by \(n_{f}\) and \(\bm{u}_{f}\).
\end{definition}
\begin{remark}
The conservation of both, \(\rho_{f}\) and \(\bm{u}_{f}\), respectively \eqref{eq:boltzStatisticEquilibrium7} and \eqref{eq:boltzStatisticEquilibrium8}, is upheld, since \(\ln(M^{\mathrm{eq}}_{f})\) is a collision invariant of \(Q\) (cf.\ Theorem~1.5 in \cite{krause2010fluid}).
\end{remark}
\begin{definition}
With $Q$ from \eqref{eq:collisionOperatorQ} implanted in \eqref{eq:continuousBE}, the BGK Boltzmann equation (BGKBE) reads
\begin{align}\label{eq:BGKBE}
  \underbrace{\left( \partial_t + \bm{v} \cdot \bm{\nabla}_{\bm{x}} + \frac{\bm{F}}{m} \cdot \bm{\nabla}_{\bm{v}}\right) }_{=~\frac{\mathrm{D}}{\mathrm{D}t}}  f = Q(f) \quad & \text{in } \mathfrak{R} , 
\end{align}
where \(\mathrm{D}/(\mathrm{D} t)\) is referred to as material derivative, and \( f (\cdot , \cdot , 0  ) = f_{0} \) sets a suitable initial condition. 
Here and below, the variable \(f\) is renamed to obey \eqref{eq:BGKBE} instead of \eqref{eq:continuousBE}. 
\end{definition}
\begin{remark}
Mostly under strict assumptions, several existence and uniqueness results for solutions to \eqref{eq:BGKBE} have been proven in the past. 
The global existence of solutions to the BGKBE \eqref{eq:BGKBE} has been rigorously proven in \cite{perthame1989global}. 
Weighted \(L^{\infty}\) bounds and uniqueness have later been established on bounded domains \cite{perthame1993weighted} and in \(\mathbb{R}^{d}\) \cite{mischler1996uniqueness}. 
Moreover, hydrodynamic scaling limits of the moments \(\rho_{f}\) and \(\bm{u}_{f}\) toward Leray's weak solutions of the incompressible NSE \cite{leray1934mouvement} have been rigorously proven by Saint-Raymond~\cite{saint-raymond2003bgk}. 
Below, we will refer to this type of scaling limit as diffusive instead, due to the presence of diffusion terms in the macroscopic limit. 
\end{remark}

\subsection{Homogenized BGK Boltzmann collision} 
As common to classical derivations in LBMs, we start with the mesoscopic viewpoint to formally assess the continuum limit toward the macroscopic TEQ. 
Apart from the procedure itself being classical, to the knowledge of the authors, the results below are novel. 
Let \(K\) denote the single eigenvalue of the permeability tensor \(\mathbf{A}\) according to the Definition~\ref{def:homogenizedNSEtarget}. 
All other definitions follow the notation in \cite{simonis2022limit,simonis2023pde}. 
\begin{definition}
Based on \eqref{eq:feq}, the homogenized Maxwellian for the BGK collision \eqref{eq:collisionOperatorQ} is defined as 
\begin{align}\label{eq:porousVeloMaxwellian}
M_f^{\mathrm{eq}} = f^{\mathrm{eq}}(n_f,\varpi \bm{u}_f,T) 
\end{align}
with an additional prefactor called porosity control 
\begin{align} \label{eq: d definition}
 \varpi= 1 - \nu\tau K^{-1} 
\end{align}
in the velocity argument. 
For $\varpi=1$, the collision reduces to the classical BGK operator \eqref{eq:collisionOperatorQ}.
\end{definition}
For any $\rho_f$, $\bm{u}_f$, $T$ and $\varpi$ we obtain the zeroth, first and second order balance laws
\begin{align} 
   \rho_{M^{\mathrm{eq}}_f} = m \int_{\mathbb{R}^d} M^{\mathrm{eq}}_f \,\mathrm{d}\bm{v} & = m \int_{\mathbb{R}^d} f^{\mathrm{eq}} (n_f, \varpi \bm{u}_f, T)\,\mathrm{d}\bm{v} =\rho_f, \label{eq: hbgk statistic equilibrium 1}\\
\bm{u}_{M^{\mathrm{eq}}_f} = \frac{1}{n_{M^{\mathrm{eq}}_f}} \int_{\mathbb{R}^d} \bm{v} M^{\mathrm{eq}}_f \,\mathrm{d}\bm{v} &= \frac{1}{n_f} \int_{\mathbb{R}^d} \bm{v} f^{\mathrm{eq}} (n_f, \varpi \bm{u}_f, T)\,\mathrm{d}\bm{v} = \varpi \bm{u}_f , \label{eq: hbgk statistic equilibrium 2}\\
 p_{M^{\mathrm{eq}}_f} = \frac{m}{d} \int_{\mathbb{R}^d} \left(\bm{v} - \bm{u}_{M^{\mathrm{eq}}_f}\right)^{2} M^{\mathrm{eq}}_{f} \,\mathrm{d} \bm{v}  & = \frac{m}{d} \int_{\mathbb{R}^d} \left(\bm{v} - \varpi \bm{u}_f\right)^{2} f^{\mathrm{eq}} (n_f, \varpi \bm{u}_f, T) \,\mathrm{d}\bm{v} = p_f, \label{eq: hbgk statistic equilibrium 3}
\end{align}
respectively. 
Notably, the hydrodynamic first order moment of \(M^{\mathrm{eq}}_{f}\) in \eqref{eq: hbgk statistic equilibrium 2} differs from the one of \(f^{\mathrm{eq}}\) due to the prefactored porosity control \(\varpi\).
\begin{definition}
With the homogenized $M_{f}^{\mathrm{eq}}$ from \eqref{eq:porousVeloMaxwellian} implanted in \eqref{eq:collisionOperatorQ}, the homogenized BGKBE (HBGKBE) reads
\begin{align}\label{eq:HBGKBE}
  \frac{\mathrm{D}}{\mathrm{D}t} f = Q(f) \quad & \text{in } \mathfrak{R} .
\end{align}
\end{definition}

\begin{remark}
In the present mesoscopic framework, the term homogenized refers to generalizing the BGKBE as a special case for \(\varpi = 1\) (via \(K\to \infty\)) to a broader validity where \(\varpi \neq 1\). 
Below, we formally indicate that for \(\varpi < 1\) the homogenized Maxwellian \eqref{eq:porousVeloMaxwellian} leads to imposing a nonstandard hydrodynamic similarity of the HBGKBE to the HNSE in the broadest sense of Hilbert's sixth problem. 
The artificial case of \(\varpi > 1\) is neglected hereafter. 
\end{remark}

\subsection{Homogenized diffusive limit}
Analogously to the derivation in \cite{simonis2022limit}, we relate the HBGKBE \eqref{eq:HBGKBE} to the HNSE \eqref{eq:targetEBTL} in the sense of diffusive limiting. 
To this end, we formally verify that the assumed to be well-defined moments in Definition \ref{def:moments} obey the balance equations of the HNSE. 
The derivation is done in three steps (see \cite{simonis2022limit}). 

\subsubsection{Step 1: Mass conservation and momentum balance} 
Let $f^\star$ be a solution to the HBGKBE \eqref{eq:HBGKBE}.
Multiplying \eqref{eq:HBGKBE} by $m$ and integrating over the velocity space $\Xi=\Rz^d$ yields the solenoidal constraint in \eqref{eq:targetEBTL} after division by the constant \(\rho_{f^{\star}}\), where the force term vanishes when applying Corollary~5.2 from \cite{krause2010fluid} with $g = 1$ and $\bm{a}=\bm{F}$ in the respective notation. 
To balance momentum, we integrate \(m \bm{v} \times \)\eqref{eq:HBGKBE} over the $\Xi=\Rz^d$ and obtain in $\Omega_{T}$ that
\begin{align} \label{eq:momentumHLBM}
\partial_ t \left( \rho_{f^\star} \bm{u}_{f^\star} \right) + \bm{\nabla}_{\bm{x}} \cdot \mathbf{P}_{f^\star} + \left(\rho_{f^\star}\bm{u}_{f^\star} \cdot \bm{\nabla}_{\bm{x}}\right) \bm{u}_{f^\star} + \bm{F} &= -\frac{1}{\tau} \left( \rho_{f^\star} \bm{u}_{f^\star} - \varpi  \rho_{f^\star} \bm{u}_{f^\star} \right) \nonumber \\
 &= - \nu K^{-1} \rho_{f^\star} \bm{u}_{f^\star}.
\end{align}
Besides the homogenization term on the right hand side, the derivation of \eqref{eq:momentumHLBM} closely follows the procedure in \cite{simonis2022limit,simonis2023pde}. 
In the end, via \eqref{eq:momentumHLBM}\(/ \rho_{f^\star}\) we recover a balance law of momentum in conservative form where the additional term $-\nu K^{-1} \bm{u}_{f^\star}$ is induced by the homogenization controlled equilibrium and corresponds to \( - (\nu / (\sigma^2) )\mathbf{A}^{-1} \bm{u} \) under the assumptions on the porous structure made above. 
Hence, with a suitably defined \(\mathbf{P}_{f^\star}\) conforming to the assumptions of incompressible Newtonian flow, the HNSE~\eqref{eq:targetEBTL} is reached in the diffusive limit. 
This incompressible limit regime of the HBGKBE \eqref{eq:HBGKBE} arises from parameter alignment to diffusion terms. 
We thus extend the derivation given in \cite{krause2010fluid,simonis2022limit} for the classical BGKBE to the HBGKBE. 

\subsubsection{Step 2: Incompressible limit}
We recall the definitions and assignments made in \cite{simonis2022limit}, i.e.\ the incompressible limit regime of the BGKBE \eqref{eq:BGKBE} is obtained via aligning parameters to the diffusion terms \cite{krause2010fluid,simonis2022limit}. 
Based on that, we perform the same assignments here to obtain the HBGKBE~\eqref{eq:HBGKBE} in the diffusion limit. 
Let $l_\mathrm{f}$ be the mean free path, $\overline{c}$ the mean absolute thermal velocity, and \(\nu>0\) a kinematic viscosity. 
Assuming that a characteristic length $L$ and a characteristic velocity $U$ are given, we define the Knudsen number, the Mach number and the Reynolds number, respectively
\begin{align} 
\Kn & \coloneqq  \frac{ l_{\mathrm{f}} }{L} , \\
\Ma & \coloneqq \frac{U}{\bar{c}_\mathrm{s}} , \\
\Rey & \coloneqq \frac{U L}{\nu}.
\end{align}
These nondimensional numbers relate as 
\begin{align}
\Rey = \frac{l_{\mathrm{f}} \bar{c}_\mathrm{s}}{\nu} \frac{\Ma}{\Kn} =\sqrt{\frac{24}{\pi}} \frac{\Ma}{\Kn} ,
\end{align}
via defining \(\nu \coloneqq \pi \overline{c} l_{\mathrm{f}} / 8\) and the isothermal speed of sound \(\bar{c}_\mathrm{s} \coloneqq \sqrt{3 R\theta}\) (see also \cite{saint-raymond2003bgk} and references therein). 
\begin{definition}
To link the mesoscopic distributions with the macroscopic continuum we inversely substitute \(\bar{c}_{\mathrm{s}}\) with an artificial parameter \(\varepsilon \in \Rzsp\) through 
\begin{align}
\bar{c}_{\mathrm{s}} \mapsfrom \frac{1}{\varepsilon} . 
\end{align}
Here and below, the symbol \(\mapsfrom\) denotes the assignment operator. 
\end{definition}
In the limit $\varepsilon \searrow 0$, the incompressible continuum is reached, since $\Kn$ and $\Ma$ tend to zero while $\Rey$ remains constant \cite{saint-raymond2003bgk}. 
Based on that, we assign
\begin{align} \label{eq:cOverline}
\overline{c}= \sqrt{\frac{8k_{\mathrm{B}} \theta}{m \pi}} & \mapsfrom \sqrt{\frac{8}{3 \pi}} \frac{1}{\varepsilon} , \\
l_{\mathrm{f}} & \mapsfrom \sqrt{\frac{24}{\pi}} \nu \varepsilon
\end{align}
and \eqref{eq:cOverline} unfold the relaxation time  
\begin{align} 
  \tau = \frac{l_{\mathrm{f}}}{\overline{c}} \mapsfrom 3 \nu \varepsilon^2 .
\end{align}
\begin{definition}
We reassign the so called porosity controller
\begin{align} \label{eq:bgkPorousity}
 \varpi \mapsfrom 1 - 3 \nu^2 \varepsilon^2  K^{-1}  \eqqcolon \varpi_{\varepsilon} ,
\end{align}
where \(\varepsilon>0\) is a scaling parameter, and define the \(\varepsilon\)-parametrized HBGKBE similarly to the \(\varepsilon\)-parametrized BGKBE in \cite{simonis2022limit} as
\begin{align} \label{eq: lbm discrete velocity 3}
\frac{\mathrm{D}}{\mathrm{D} t} f = - \frac 1 {3\nu \varepsilon^2} \left( f-M_{f}^{\mathrm{eq}} \right)  \quad \text{in } \mathfrak{R} , 
\end{align}
where the homogenized Maxwellian distribution evaluated at $(n_f,\varpi_\varepsilon\bm{u}_f)$ now reads 
\begin{align} \label{eq:homMaxwellianParametrized}
 M_{f}^{\mathrm{eq}} = \frac{n_f \varepsilon^d}{\left( \frac 2 3 \pi \right)^{\frac{d}{2}}} \exp\left( - \frac{3}{2} \left( \bm{v} \varepsilon - \varpi_\varepsilon \bm{u}_f  \varepsilon\right)^2\right) \quad \text{in } \mathfrak{R} .
\end{align}
\end{definition}
The HBGKBE \eqref{eq: lbm discrete velocity 3} is accordingly transformed to
\begin{align} \label{eq: boltz statistic limit 8}
 f & = M_{f}^{\mathrm{eq}} - 3\nu \varepsilon^2 \matD f \quad \text{in } \mathfrak{R} .
\end{align}
Repeating the material derivative through \((\matDil)\)\eqref{eq: boltz statistic limit 8} yields
\begin{align} \label{eq: boltz statistic limit 9}
 \matD f = \matD M_{f}^{\mathrm{eq}} - 3 \nu \varepsilon^2 \left( \matD \right)^{2} f  \quad \text{in } \mathfrak{R}.
\end{align}
The expression \eqref{eq: boltz statistic limit 9} serves to substitute $(\matDil) f$ in \eqref{eq: boltz statistic limit 8} which gives
\begin{align} 
 f  =  M_{f}^{\mathrm{eq}} - 3 \nu \varepsilon^2 \matD M_{f}^{\mathrm{eq}} + \left( 3\nu \varepsilon^2 \matD \right)^{2} f   \quad \text{in } \mathfrak{R}.
\end{align}
Repeating the above subsequently produces higher order terms and substitutions. 
The evolving family unfolds the power series 
\begin{align} \label{eq: boltz statistic limit 11}
 f  = \sum\limits_{i=0}^\infty  \left( -3\nu \varepsilon^2 \matD \right)^i M_{f}^{\mathrm{eq}} \quad \text{in } \mathfrak{R} .
\end{align}

\subsubsection{Step 3: Newton's hypothesis}
To complete the macroscopic limit the stress tensor $\mathbf{P}_{f^\star}$ in \eqref{eq:momentumHLBM} has to be matched to \eqref{eq:targetEBTL}, which for a solution \(f^{\star}\) to the HBGKBE~\eqref{eq:HBGKBE} yields
\begin{align}\label{eq:newtonsHypHLBM}
  \mathbf{P}_{f^\star} = - p_{f^\star} \mathbf{I} + 2 \nu \rho \mathbf{D}_{f^{\star}} + \mathcal{O}\left( \varepsilon^{b}\right) \quad \text{in } \Omega_{T}
\end{align}
up to an order \(b>0\). 
Using \eqref{eq: boltz statistic limit 11}, an approximation ansatz of the form
\begin{align} \label{eq: boltz statistic limit 12}
 f ^\star = M_{f^\star}^{\mathrm{eq}} - 3\nu \varepsilon^2 \matD M_{f^\star}^{\mathrm{eq}} \quad \text{in } \mathfrak{R}
\end{align}
is chosen. 
As before, this choice is based upon the assumption that higher order terms are sufficiently small for $\varepsilon \to 0$ such that the order \(b\) in turn is large enough. 
To verify \eqref{eq:newtonsHypHLBM}, we compute the stress tensor according to its definition \eqref{eq:statisticPDFstress}. 
In the following, \(f\)-indices at physical moment expressions are omitted for the sake of simplicity. 
At first, we substitute the material derivative and use the mass conservation to obtain 
\begin{align}\label{eq: boltz statistic limit 13}
\matD M_{f}^{\mathrm{eq}} 
&= 
\left( 
 			\frac{1}{\rho} \matD \rho 
 			+ 3 \varepsilon^2 \varpi_\varepsilon \bm{c}_{\varpi} \cdot \matD \bm{u} 
			- \frac{3 \varepsilon^2\bm{c}_{\varpi}}{m} \cdot \bm{F}  			
 			\right) M_{f}^{\mathrm{eq}}  \nonumber \\
&= 
\biggl[
			\frac{1}{\rho} \left( \partial_{t} + \bm{v}  \cdot  \bm{\nabla}_{\bm{x}} \right) \rho \nonumber\\
   &\hphantom{= \biggl[} 
			+ 3 \varepsilon^2 \varpi_\varepsilon \bm{c}_{\varpi}  \cdot   \left( \partial_{t} + \bm{v}  \cdot  \bm{\nabla}_{\bm{x}} \right) \bm{u} 
			- \frac{3 \varepsilon^2 \bm{c}_{\varpi}}{m}  \cdot  \bm{F}		
			\biggr] M_{f}^{\mathrm{eq}} \nonumber \\
&= 
\biggl[
			\frac{1}{\rho} \left( -\bm{u} \cdot \bm{\nabla}_{\bm{x}}\rho - \rho \bm{\nabla}_{\bm{x}} \cdot \bm{u}  + \bm{v}  \cdot  \bm{\nabla}_{\bm{x}} \rho \right) \nonumber\\
   &\hphantom{= \biggl[} +  3 \varepsilon^2 \varpi_\varepsilon \bm{c}_{\varpi}  \cdot   \left( \partial_{t} + \bm{v}  \cdot  \bm{\nabla}_{\bm{x}} \right) \bm{u} 
			- \frac{3 \varepsilon^2 \bm{c}_{\varpi}}{m}  \cdot  \bm{F} 			
  			\biggr] M_{f}^{\mathrm{eq}} \nonumber \\
&= \biggl[ 
			-\underbrace{\bm{\nabla}_{\bm{x}} \cdot \bm{u}}_{=:~a_f} 
			+ \smash{\underbrace{\frac{\bm{c}}{\rho}  \cdot  \bm{\nabla}_{\bm{x}} \rho}_{=:~b_f} 
			+ \underbrace{3 \varepsilon^2\varpi_\varepsilon \bm{c}_{\varpi} \cdot \partial_{t}\bm{u}}_{=:~c_f}} \nonumber\\
   &\hphantom{= \biggl[}
			+ \smash{\underbrace{3 \varepsilon^2\varpi_\varepsilon \bm{c}_{\varpi} \cdot  \left(\bm{v}  \cdot  \bm{\nabla}_{\bm{x}}\right) \bm{u}}_{=:~d_f}} 
			- \underbrace{\frac{3 \varepsilon^2\bm{c}_{\varpi}}{m}  \cdot  \bm{F} }_{=:~e_{f}}		
			\biggr] M_{f}^{\mathrm{eq}} 
\end{align}
in $\mathfrak{R}$, where 
\begin{align}\label{eq:relVeloHom}
\bm{c} & \coloneqq \bm{v}-\bm{u}, \\
\bm{c}_{\varpi} & \coloneqq  \bm{v}-\varpi_{\varepsilon} \bm{u}, 
\end{align}
are relative velocities, i.e.\ the deviation of the particle velocities \(\bm{v}\) from the local mean \(\bm{u}\). 
Inserting the derivative \eqref{eq: boltz statistic limit 13} in \eqref{eq: boltz statistic limit 12} yields
\begin{align} \label{eq: boltz statistic limit 14}
 f &= M_{f}^{\mathrm{eq}}\left[1 - 3 \varepsilon^{2} \nu  \left(-a_f+b_f+c_f+d_f+e_f\right)\right] \quad \text{in } \mathfrak{R}.
\end{align}
Secondly, we evaluate the velocity space integrals of the individual terms \(a_{f}, b_{f}, \ldots , e_{f}\). 
To this end, we use the symmetric properties of $M_{f}^{\mathrm{eq}}$ and the fact that $M_{f}^{\mathrm{eq}}/n$ is a normal distribution with covariance matrix $1/(3 \varepsilon^2) \mathbf{I}_{d}$. 
In $\Omega_{T}$ and for any $i,j,k,l\in \left\{1,2,...,d\right\}$ we verify that
\begin{align} 
 m \int_{\Rz^d} c_i c_j M_{f}^{\mathrm{eq}}\,\mathrm{d}\bm{v} 
 &= 
  m\int_{\Rz^d} c_{\varpi,i} c_{\varpi,j} M_{f}^{\mathrm{eq}} \,\mathrm{d}\bm{v} \nonumber \\
  & \quad\quad - m\int_{\Rz^d} \left( 1-\varpi_\varepsilon\right) u_i \left[ 2 v_j   - \left(1 +  \varpi_\varepsilon \right) u_j \right] M_{f}^{\mathrm{eq}}\,\mathrm{d}\bm{v}  \nonumber \\
 &= \frac{\rho}{3 \varepsilon^2} \delta_{ij} -  \rho \left( 1-\varpi_\varepsilon \right)^{2}  u_i u_j \nonumber \\
 &= p \delta_{ij} + \mathcal{O} ( \varepsilon^{4} ), \label{eq:ccH} 
\end{align}
as well as
\begin{align}
m \int_{\Rz^d} c_i c_j c_k M_{f}^{\mathrm{eq}}\,\mathrm{d}\bm{v} 
&=  \underbrace{m\int_{\Rz^d} c_{\varpi,i} c_{\varpi,j} c_{\varpi,k} M_{f}^{\mathrm{eq}} \,\mathrm{d}\bm{v}}_{=~0}  \nonumber \\
										    & \quad\quad + \left( \varpi_\varepsilon - 1\right)\rho u_i \left[ \frac{1}{3 \varepsilon^2} \delta_{jk} -  \left( 1-\varpi_\varepsilon \right)^{2}  u_j u_k \right] \nonumber \\
										    & \quad\quad + \left( \varpi_\varepsilon - 1\right)\rho u_j \left[ \frac{1}{3 \varepsilon^2} \delta_{ik} -  \left( 1-\varpi_\varepsilon \right)^{2}  u_i u_k \right] \nonumber \\
										    & \quad\quad + \left( \varpi_\varepsilon - 1\right)\rho u_k \left[ \frac{1}{3 \varepsilon^2} \delta_{ij} -  \left( 1-\varpi_\varepsilon \right)^{2}  u_i u_j \right] \nonumber \\
										    &= \sum\limits_{\stackrel{\alpha \beta \gamma~\in}{\left\{ ijk, jik, kij\right\}} } \left\{ \left( \varpi_\varepsilon - 1\right) \rho u_\alpha \left[ \frac{1}{3 \varepsilon^2} \delta_{\beta\gamma} -  \left( 1-\varpi_\varepsilon \right)^{2}  u_\beta u_\gamma \right] \right\} \nonumber \\
												&= \mathcal{O}(1) , \label{eq:cccH} 
\end{align}
and
\begin{align}m\int_{\Rz^d} c_i c_j c_{\varpi,k} v_l M_{f}^{\mathrm{eq}}\,\mathrm{d}\bm{v} 
                        &= \rho                                      	 	\biggl\{ \frac{1}{9 \varepsilon^4} \left( \delta_{ij}\delta_{kl} + \delta_{ik}\delta_{jl} + \delta_{il}\delta_{jk} \right) \nonumber \\    
                                              	 &  \quad\quad+ \left( \varpi_\varepsilon - 1 \right)^{2} u_i u_j \left[ \frac{1}{3 \varepsilon^2} \delta_{kl} - \left(1-\varpi_\varepsilon\right)^{2} u_k u_l \right]  \nonumber \\
                                              	 &  \quad\quad+ \left( \varpi_\varepsilon - 1 \right) \varpi_\varepsilon u_i u_l \left[ \frac{1}{3 \varepsilon^2} \delta_{jk} - \left(1-\varpi_\varepsilon\right)^{2} u_j u_k \right]  \nonumber \\ 
                                              	 &  \quad\quad+ \left( \varpi_\varepsilon - 1 \right) \varpi_\varepsilon u_j u_l \left[ \frac{1}{3 \varepsilon^2} \delta_{ik} - \left(1-\varpi_\varepsilon\right)^{2} u_i u_k \right]  \biggr\}, \nonumber \\ 																			&=  \frac{\rho}{9 \varepsilon^4} \left( \delta_{ij}\delta_{kl} + \delta_{ik}\delta_{jl} + \delta_{il}\delta_{jk} \right) + \mathcal{O}(1) . \label{eq:cccvH}				
\end{align}
The order estimates hold since, by construction $\varpi_\varepsilon-1 \in \mathcal{O}(\varepsilon^2)$. 
Hence, we obtain
\begin{align}										     
 m \int_{\Rz^d} c_i c_j a_f M_{f}^{\mathrm{eq}}\,\mathrm{d}\bm{v} &= \left( m \int_{\Rz^d} c_i c_j M_{f}^{\mathrm{eq}}\,\mathrm{d}\bm{v} \right) \partial_{x_k} u_k \nonumber \\
 										         &\stackrel{\eqref{eq:ccH}}{=} \left(\frac{\rho}{3 \varepsilon^2} + \mathcal{O}(\varepsilon^{4})\right) \partial_{x_k} u_k\nonumber \\
														 &= \frac{\rho}{3 \varepsilon^2} \partial_{x_k} u_k + \mathcal{O}(\varepsilon^{4}) , \\
 m \int_{\Rz^d} c_i c_j b_f M_{f}^{\mathrm{eq}}\,\mathrm{d}\bm{v} &= \left( m \int_{\Rz^d} c_i c_j c_k M_{f}^{\mathrm{eq}}\,\mathrm{d}\bm{v} \right) \frac{1}{\rho}\partial_{x_k} \rho \nonumber \\											 &\stackrel{\eqref{eq:cccH}}{=} \mathcal{O}(1) , \\
 m \int_{\Rz^d} c_i c_j c_f M_{f}^{\mathrm{eq}}\,\mathrm{d}\bm{v} &= \left( m \int_{\Rz^d} c_i c_j c_k M_{f}^{\mathrm{eq}}\,\mathrm{d}\bm{v} \right) 3\varepsilon^2\varpi_\varepsilon \partial_{t} u_k \nonumber \\ 
 &\stackrel{\eqref{eq:cccH}}{=} \mathcal{O}(\varepsilon^{2}) , \\
 m\int_{\Rz^d} c_i c_j d_f M_{f}^{\mathrm{eq}}\,\mathrm{d}\bm{v} &= \left( m \int_{\Rz^d} c_i c_j c_{\varpi,k} v_l M_{f}^{\mathrm{eq}}\,\mathrm{d}\bm{v} \right) 3 \varepsilon^2 \varpi_\varepsilon \partial_{x_l} u_k  \nonumber \\
&\stackrel{\eqref{eq:cccvH}}{=} 3 \varepsilon^2 \varpi_\varepsilon \partial_{x_l} u_k \left[ 
\frac{\rho}{9 \varepsilon^4} \left( \delta_{ij}\delta_{kl} + \delta_{ik}\delta_{jl} + \delta_{il}\delta_{jk} \right) + \mathcal{O}(1) \right] \nonumber \\
&\stackrel{\eqref{eq:bgkPorousity}}{=} \partial_{x_l} u_k 
												\frac{\rho}{3 \varepsilon^2} \left( \delta_{ij}\delta_{kl} + \delta_{ik}\delta_{jl} + \delta_{il}\delta_{jk} \right) + \mathcal{O}(1), \\		
m\int_{\Rz^d} c_i c_j e_f M_{f}^{\mathrm{eq}}\,\mathrm{d}\bm{v} &= \rho \left(\varpi_\varepsilon -1\right) F_{k} u_{j}  \delta_{ik}\nonumber \\
&= O(\varepsilon^2) .
\end{align}
Third and finally, each \(\mathbf{P}\)-component $P_{ij}$ for $i,j \in \left\{1, 2, ..., d \right\}$ is computable in \(\mathfrak{R}\). 
Via reordering terms, we obtain 
\begin{align} 
P_{ij} 
&=  m \int_{\Rz^d} c_i c_j  \left[ 1- 3 \nu \varepsilon^{2} \left(-a_f+b_f+c_f+d_f + e_{f} \right) \right] M_{f}^{\mathrm{eq}}\,\mathrm{d}\bm{v}   \nonumber\\
&= p \delta_{ij} 
 			-  3 \nu \varepsilon^{2} \left[
			- \frac{\rho}{3 \varepsilon^2} \partial_{x_k} u_k 
			+ \partial_{x_l} u_k \frac{\rho}{3 \varepsilon^2} \left( \delta_{ij}\delta_{kl} + \delta_{ik}\delta_{jl} + \delta_{il}\delta_{jk} \right) + \mathcal{O}(1)
		 \right]      \nonumber \\
        &= p \delta_{ij} + \nu \rho \left[ \delta_{ij} \partial_{x_k} u_k - \partial_{x_l} u_k \left( \delta_{ij}\delta_{kl} + \delta_{ik}\delta_{jl} + \delta_{il}\delta_{jk}\right) \right] + \mathcal{O}\left( \varepsilon^{2}\right) \nonumber \\
        &= p \delta_{ij} -  \nu \rho \left( \partial_{x_{i}} u_j + \partial_{x_j} u_i \right) + \mathcal{O}\left(\varepsilon^{2} \right) 
\end{align}
and thus equivalently
\begin{align}
\mathbf{P}  = p \mathbf{I}_{d} - 2 \nu \rho \mathbf{D}  + \mathcal{O}\left(\varepsilon^{2}\right) \quad \text{in } \Omega_{T}, 
\end{align}
which formally proves the approximate recovery of the HNSE~\eqref{eq:targetEBTL} in the hydrodynamic limit.

\section{Conclusion}\label{sec:conclusion}

The overall aim of this series of works is to construct HLBMs that approximate the governing equations for homogenized nonstationary nonlinear fluid flow through porous media. 

Summarizing the present work (part I), we make two contributions. 
At first, we recall the existing framework of Allaire for homogenizing the NSE with specific geometric configurations. 
We gather proven results towards a unified homogenization of incompressible nonstationary NSE in the framework of porous media as abstracted periodically arranged obstacles. 
We restate the stationary simplification and subsequently include time-dependency. 
In the latter case, we form a conjecture of four cases as the result of homogenization depending on the size of the obstacles: (i) nonstationary NSE, (ii) nonstationary BL, (iii) time-dependent DL, and (iv) DL with memory. 
We isolate the missing proofs and review existing results. 
Further, an application-oriented rationale is presented which determines the porosity range recoverable by the mathematical model. 
Based on that, we formulate a modified nonstationary BL which is termed HNSE and serves as a unified targeted PDE system for the HLBM to be proposed in the sequel (part II \cite{simonis2023hlbmPartII}). 
Second, as a first step toward the HLBM, we propose a kinetic model, the HBGKBE, which approximates the nonstationary HNSE in a diffusive scaling limit. 
We formally prove that the zeroth and first order moments of the kinetic model provide solutions to the mass and momentum balance variables of the macroscopic model up to certain orders in the scaling parameter.  
Specifically, the stress tensor is approximated with \(O(\varepsilon^{2})\) in the diffusive limit. 

Future studies with respect to mathematical and kinetic model extensions should involve mixed boundary conditions at the porous matrix \cite{fabricius2017homogenization}, porous--void interface conditions \cite{griebel2009homogenisation}, porosity gradients in the solid matrix \cite{dalwadi2015understanding} or investigating modeling possibilities of anisotropic permeability tensors \cite{bang1999application}.

In the sequel of this work (part II \cite{simonis2023hlbmPartII}), we construct and validate HLBMs to approximate the HNSE for porous media flow (see Figure~\ref{fig:limitCons}) within the framework of limit consistency introduced in \cite{simonis2022limit} and motivated already in \cite{krause2010fluid}. 
Therein, based on determining the truncation errors of governing families of equations at each level of discretization of the HBGKBE, the limit consistency of order two and one of the HLBE for the pressure and velocity, respectively, of the homogenized NSE is proven. 
In addition, HLBM simulations in various parameter regimes are conducted, numerically validating the present theoretical predictions.

\textbf{\emph{Acknowledgements:}}
S.\ Simonis would like to thank Fabian Klemens for valuable discussions.

\textbf{\emph{Funding:}}
This work was supported by the Deutsche Forschungsgemeinschaft (DFG, German Research Foundation, DOI: \href{http://dx.doi.org/10.13039/501100001659}{10.13039/501100001659}), project number \href{https://gepris.dfg.de/gepris/projekt/382064892?context=projekt&task=showDetail&id=382064892&}{382064892/SPP2045} as well as project number \href{https://gepris.dfg.de/gepris/projekt/468824876}{468824876}. 

\textbf{\emph{Author contribution statement:}}
\textbf{S.\ Simonis}: Conceptualization, Methodology, Validation, Formal analysis, Investigation, Writing - Original Draft, Writing - Review \& Editing, Visualization, Supervision, Project administration;
\textbf{N.\ Hafen}: Methodology, Validation, Investigation, Writing - Review \& Editing, Funding acquisition; 
\textbf{J.\ Je{\ss}berger}: Writing - Review \& Editing, Methodology, Formal analysis; 
\textbf{D.\ Dapelo}: Writing - Review \& Editing; 
\textbf{G.\ Th\"{a}ter}: Writing - Review \& Editing, Supervision; 
\textbf{M.\ J.\ Krause}: Resources, Writing - Review \& Editing, Supervision, Funding acquisition. 
All authors read and approved the final version of the manuscript.

%

\end{document}